
\documentclass[12pt]{amsart}
\usepackage{amsmath,amsthm,amsfonts,amssymb,mathrsfs}
\usepackage{color}

\usepackage{tikz}

\usepackage{amssymb,cite}
\usepackage[colorlinks,plainpages,citecolor=magenta, linkcolor=blue, backref]{hyperref}

\usepackage{hyperref}
\date{\today}

 \setlength{\textwidth}{18.6truecm}
 \setlength{\textheight}{25.truecm}
 \setlength{\oddsidemargin}{-30pt}
 \setlength{\evensidemargin}{-30pt}
 \setlength{\topmargin}{-35pt}


\usepackage{hyperref}
\input{xy}
 \xyoption{all}
 \xyoption{arc}

\newtheorem{theorem}{Theorem}

\newtheorem{proposition}{Proposition}

\newtheorem{lemma}{Lemma}
\theoremstyle{definition}
\newtheorem{definition}{Definition}
\newtheorem{example}{Example}
\newtheorem{remark}{Remark}

\begin{document}

\title[On the semigroup of monoid endomorphisms of the semigroup $\mathscr{C}_{+}(a,b)$]{On the semigroup of monoid endomorphisms of the semigroup $\mathscr{C}_{+}(a,b)$}
\author{Oleg Gutik and Sher-Ali Penza}
\address{Ivan Franko National University of Lviv, Universytetska 1, Lviv, 79000, Ukraine}
\email{oleg.gutik@lnu.edu.ua, SHER-ALI.PENZA@lnu.edu.ua}

\keywords{Endomorphism, injective, bicyclic semigroup, subsemigroup,  direct product, semidirect product}

\subjclass[2020]{20M15, 20M20}

\begin{abstract}
Let $\mathscr{C}_{+}(a,b)$ be the submonoid of the bicyclic monoid which is studied in \cite{Makanjuola-Umar=1997}.
We describe monoid endomorphisms of the semigroup $\mathscr{C}_{+}(a,b)$ which are generated by the family of all congruences of the bicyclic monoid and all injective monoid endomorphisms of $\mathscr{C}_{+}(a,b)$.
\end{abstract}

\maketitle


\section*{Introduction}
We shall follow the terminology of~\cite{Clifford-Preston-1961, Clifford-Preston-1967, Lawson=1998}. By $\omega$ we denote the set of all non-negative integers, by $\mathbb{N}$ the set of all positive integers.

A semigroup $S$ is called {\it inverse} if for any
element $x\in S$ there exists a unique $x^{-1}\in S$ such that
$xx^{-1}x=x$ and $x^{-1}xx^{-1}=x^{-1}$. The element $x^{-1}$ is
called the {\it inverse of} $x\in S$. If $S$ is an inverse
semigroup, then the function $\operatorname{inv}\colon S\to S$
which assigns to every element $x$ of $S$ its inverse element
$x^{-1}$ is called the {\it inversion}.

If $S$ is a semigroup, then we shall denote the subset of all
idempotents in $S$ by $E(S)$. If $S$ is an inverse semigroup, then
$E(S)$ is closed under multiplication and we shall refer to $E(S)$ as a
\emph{band} (or the \emph{band of} $S$). Then the semigroup
operation on $S$ determines the following partial order $\preccurlyeq$
on $E(S)$: $e\preccurlyeq f$ if and only if $ef=fe=e$. This order is
called the {\em natural partial order} on $E(S)$. A \emph{semilattice} is a commutative semigroup of idempotents.

If $S$ is an inverse semigroup then the semigroup operation on $S$ determines the following partial order $\preccurlyeq$
on $S$: $s\preccurlyeq t$ if and only if there exists $e\in E(S)$ such that $s=te$. This order is
called the {\em natural partial order} on $S$ \cite{Wagner-1952}.

The bicyclic monoid ${\mathscr{C}}(p,q)$ is the semigroup with the identity $1$ generated by two elements $p$ and $q$ subjected only to the condition $pq=1$. The semigroup operation on ${\mathscr{C}}(p,q)$ is determined as
follows:
\begin{equation*}
    q^kp^l\cdot q^mp^n=q^{k+m-\min\{l,m\}}p^{l+n-\min\{l,m\}}.
\end{equation*}
It is well known that the bicyclic monoid ${\mathscr{C}}(p,q)$ is a bisimple (and hence simple) combinatorial $E$-unitary inverse semigroup and every non-trivial congruence on ${\mathscr{C}}(p,q)$ is a group congruence \cite{Clifford-Preston-1961}.

Let $\mathfrak{h}\colon S\to T$ be a homomorphism of semigroups. Then for any $s\in S$ and $A\subseteq S$ by $(s)\mathfrak{h}$ and $(A)\mathfrak{h}$ we denote the images of $s$ and $A$, respectively, under the homomorphism $\mathfrak{h}$. Also, for any $t\in T$ by $(s)\mathfrak{h}^{-1}$ we denote the full preimage of $s$ under the map $\mathfrak{h}$. A homomorphism  $\mathfrak{h}\colon S\to T$ of monoids which preserves the unit elements of $S$ is called a \emph{monoid homomorphism}.  A homomorphism  $\mathfrak{h}\colon S\to S$ of a semigroup (a monoid) is called an endomorphism (a monoid endomorphism) of $S$, and in the case when $\mathfrak{h}$ is an isomorphism then $\mathfrak{h}$ is said to be an \emph{automorphism} of $S$.

It is well-known that every automorphism of the bicyclic monoid ${\mathscr{C}}(p,q)$  is the identity self-map of ${\mathscr{C}}(p,q)$ \cite{Clifford-Preston-1961}, and hence the group \linebreak $\mathbf{Aut}({\mathscr{C}}(p,q))$ of automorphisms of ${\mathscr{C}}(p,q)$ is trivial. In \cite{Gutik-Prokhorenkova-Sekh=2021} all endomorphisms of the bicyclic semigroup are described and it is proved that the semigroups $\mathrm{\mathbf{End}}({\mathscr{C}}(p,q))$ of all endomorphisms of the bicyclic semigroup ${\mathscr{C}}(p,q)$ is isomorphic to the semidirect products $(\omega,+)\rtimes_\varphi(\omega,*)$, where $+$ and $*$ are the usual addition and the usual multiplication on $\omega$.

Subsemigroups of then bicyclic monoid were studied in \cite{Descalco-Ruskuc-2005, Descalco-Ruskuc-2008, Makanjuola-Umar=1997}.
In \cite{Makanjuola-Umar=1997}  the following anti-isomorphic subsemigroups of the bicyclic monoid
\begin{equation*}
  \mathscr{C}_{+}(a,b)=\left\{b^ia^j\in\mathscr{C}(a,b)\colon i\leqslant j,\, i,j\in\omega\right\}
  \end{equation*}
and
\begin{equation*}
  \mathscr{C}_{-}(a,b)=\left\{b^ia^j\in\mathscr{C}(a,b)\colon i\geqslant j,\, i,j\in\omega\right\}
\end{equation*}
are studied. In the paper \cite{Gutik=2023} topologizations of the semigroups $\mathscr{C}_{+}(a,b)$ and $\mathscr{C}_{-}(a,b)$ are studied.

Later in this paper by $\mathfrak{End}(\mathscr{C}_{+}(a,b))$  we denote the semigroup of all monoid endomorphisms of the semigroup $\mathscr{C}_{+}(a,b)$.

In this paper we describe monoid endomorphisms of the semigroup $\mathscr{C}_{+}(a,b)$ which are generated by the family of all congruences of the bicyclic monoid and all injective monoid endomorphisms of $\mathscr{C}_{+}(a,b)$.

\section{On monoid endomorphisms of $\mathscr{C}_{+}(a,b)$ which are restrictions of homomorphisms of the bicyclic monoid}\label{section-2}

In \cite{Gutik-Prokhorenkova-Sekh=2021} was proved that every monoid endomorphism $\varepsilon\colon \mathscr{C}(a,b)\to \mathscr{C}(a,b)$ of the bicyclic monoid is one of the following forms:
\begin{enumerate}
  \item[$(i)$] $\varepsilon=\lambda_k$ for some positive integer $k$, where $(b^ia^j)\lambda_k=b^{ki}a^{kj}$ for any $i,j\in\omega$;
  \item[$(ii)$] $\varepsilon=\lambda_0$ is the annihilating endomorphism of $\mathscr{C}(a,b)$, i.e.,  $(b^ia^j)\lambda_0=1$ for any $i,j\in\omega$.
\end{enumerate}

Simple verifications show that in the both cases each of these monoid endomorphisms of the bicyclic semigroup induces the monoid endomorphism of $\mathscr{C}_{+}(a,b)$, which we denote by the similar way:
\begin{equation*}
  \lambda_k\colon \mathscr{C}_{+}(a,b)\to \mathscr{C}_{+}(a,b), b^ia^j\mapsto b^{ki}a^{kj}, \; i,j\in\omega,
\end{equation*}
for some $k\in\omega$.

For any $k_1,k_2\in \omega$ we have that
\begin{equation*}
  (b^ia^j)(\lambda_{k_1}\circ\lambda_{k_2})=((b^ia^j)\lambda_{k_1})\lambda_{k_2}=(b^{k_1i}a^{k_1j})\lambda_{k_2}=b^{k_1k_2i}a^{k_1k_2j}, \, i,j\in\omega.
\end{equation*}
This implies that $\lambda_{k_1}\circ\lambda_{k_2}=\lambda_{k_1k_2}$ for all $k_1,k_2\in \omega$, and hence the set $\left\{\lambda_k\colon k\in\omega\right\}$ of endomorphisms of $\mathscr{C}_{+}(a,b)$ is closed under the operation of composition.

By $\mathfrak{End}_{\langle\lambda\rangle}(\mathscr{C}_{+}(a,b))$  we denote the subsemigroup of $\mathfrak{End}(\mathscr{C}_{+}(a,b))$, which is generated by the family $\left\{\lambda_k\colon k\in\omega\right\}$ of endomorphisms of the monoid $\mathscr{C}_{+}(a,b)$.

\begin{proposition}\label{proposition-2.1}
The semigroup $\mathfrak{End}_{\langle\lambda\rangle}(\mathscr{C}_{+}(a,b))$ is isomorphic to the multiplicative semigroup $(\omega,*)$ of non-negative integers.
\end{proposition}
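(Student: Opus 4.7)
The plan is to define the candidate isomorphism explicitly as
$\Phi\colon(\omega,*)\to\mathfrak{End}_{\langle\lambda\rangle}(\mathscr{C}_{+}(a,b))$, $k\mapsto\lambda_k$, and then to check that $\Phi$ is bijective and multiplicative. Given the work done immediately before the proposition, most of the content has already been extracted, so the proof will mostly consist of assembling observations.

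First I would argue that the subsemigroup $\mathfrak{End}_{\langle\lambda\rangle}(\mathscr{C}_{+}(a,b))$ coincides with the set $\{\lambda_k\colon k\in\omega\}$. Indeed, the composition identity $\lambda_{k_1}\circ\lambda_{k_2}=\lambda_{k_1k_2}$ established in the paragraph preceding the proposition shows that $\{\lambda_k\colon k\in\omega\}$ is already closed under composition, so no new elements arise when one takes the subsemigroup generated by this set. Hence $\Phi$ maps $\omega$ onto $\mathfrak{End}_{\langle\lambda\rangle}(\mathscr{C}_{+}(a,b))$.

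Next, the same composition identity immediately yields $\Phi(k_1\ast k_2)=\lambda_{k_1k_2}=\lambda_{k_1}\circ\lambda_{k_2}=\Phi(k_1)\circ\Phi(k_2)$, so $\Phi$ is a semigroup homomorphism. For injectivity, I would evaluate $\lambda_k$ at the element $ba\in\mathscr{C}_{+}(a,b)$ (which lies in $\mathscr{C}_{+}(a,b)$ because $1\leqslant 1$): one has $(ba)\lambda_k=b^ka^k$, and the elements $\{b^ka^k\colon k\in\omega\}$ are pairwise distinct in $\mathscr{C}_{+}(a,b)$. Therefore $\lambda_{k_1}=\lambda_{k_2}$ forces $k_1=k_2$, and $\Phi$ is injective.

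I do not expect a genuine obstacle: the entire argument is a direct consequence of the composition formula computed just above and of the explicit description of the endomorphisms $\lambda_k$. The only point that requires even mild care is choosing an element of $\mathscr{C}_{+}(a,b)$ on which the $\lambda_k$ visibly separate values — the obvious candidates like $b$ are excluded by the constraint $i\leqslant j$, but $ba$ (or more generally any $b^ia^j$ with $i\geqslant 1$) works.
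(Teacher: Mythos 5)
Your proposal is correct and follows essentially the same route as the paper, which defines the inverse map $\mathfrak{I}\colon\lambda_k\mapsto k$ and appeals to the preceding composition formula $\lambda_{k_1}\circ\lambda_{k_2}=\lambda_{k_1k_2}$; you merely spell out the details (closure of $\{\lambda_k\}$ under composition, multiplicativity, and injectivity via evaluation at $ba$) that the paper leaves as ``simple verifications.''
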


\begin{proof}
We define the map $\mathfrak{I}\colon \mathfrak{End}_{\langle\lambda\rangle}(\mathscr{C}_{+}(a,b)) \to (\omega,*)$  by the formula $(\lambda_k)\mathfrak{I}=k$ for any $\lambda_k\in \mathfrak{End}_{\langle\lambda\rangle}(\mathscr{C}_{+}(a,b))$. The above arguments and simple verifications show that so defined map $\mathfrak{I}$ is a semigroup isomorphism.
\end{proof}

It is well known that any inverse semigroup $S$ admits the \emph{smallest} (\emph{minimal}) \emph{group congruence} $\mathfrak{C}_{\mathrm{mg}}$: $a\mathfrak{C}_{\mathrm{mg}}b$ if and only if there exists $e\in E(S)$ such that $ea=eb$ (see \cite{Lawson=1998}). The smallest group congruence $\mathfrak{C}_{\mathrm{mg}}$ on the bicyclic semigroup $\mathscr{C}(a,b)$ is determined in the following way: $b^{i_1}a^{j_1}\mathfrak{C}_{\mathrm{mg}}b^{i_2}a^{j_2}$ if and only if $i_1-j_1=i_2-j_2$ \cite{Lawson=1998}. Since the quotient semigroup $\mathscr{C}(a,b)/\mathfrak{C}_{\mathrm{mg}}$ is isomorphic to the additive group of integers $\mathbb{Z}(+)$, the natural homomorphism $\mathfrak{h}_{\mathfrak{C}_{\mathrm{mg}}}\colon \mathscr{C}(a,b)\to \mathscr{C}(a,b)/\mathfrak{C}_{\mathrm{mg}}$ generates the homomorphism $\mathfrak{h}_{\mathfrak{C}_{\mathrm{mg}}}\colon \mathscr{C}(a,b)\to \mathbb{Z}(+)$ by the formula $(b^ia^j)\mathfrak{h}_{\mathfrak{C}_{\mathrm{mg}}}=j-i$, $i,j\in\omega$. By $(\omega,+)$ we denote the additive group of non-negative integers. This implies that the restriction $\mathfrak{h}_{\mathfrak{C}_{\mathrm{mg}}}{\upharpoonleft}_{\mathscr{C}_{+}(a,b)}\colon \mathscr{C}_{+}(a,b)\to (\omega,+)$ of the homomorphism $\mathfrak{h}_{\mathfrak{C}_{\mathrm{mg}}}$ is a homomorphism, as well.

\begin{lemma}\label{lemma-2.2}
For any $i,j,k\in\omega$ with $j\geqslant i$ the set
$$
S_{i,j,k}=\left\{b^ia^i\right\}\cup\big\{(b^ja^{j+k})^n\colon n\in\mathbb{N}\big\}
$$
with the induced semigroup operation from the bicyclic monoid $\mathscr{C}(a,b)$ is isomorphic to the semigroup $(\omega,+)$.
\end{lemma}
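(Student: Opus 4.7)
My plan is to exhibit an explicit isomorphism $\psi\colon S_{i,j,k}\to(\omega,+)$ defined by $\psi(b^ia^i)=0$ and $\psi\bigl((b^ja^{j+k})^n\bigr)=n$ for $n\in\mathbb{N}$, and to verify the homomorphism conditions by direct calculation inside the bicyclic monoid. Everything reduces to three bookkeeping computations with the formula $b^pa^q\cdot b^ra^s=b^{p+r-\min\{q,r\}}a^{q+s-\min\{q,r\}}$.

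\textbf{Step 1 (powers of the generator).} I first compute $(b^ja^{j+k})^n$ in closed form. Since $j+mk\geqslant j$ for every $m\in\omega$, we have $\min\{j+mk,j\}=j$, and a one-line induction on $n$ gives $(b^ja^{j+k})^n=b^ja^{j+nk}$. The same observation yields the multiplication rule $(b^ja^{j+mk})\cdot(b^ja^{j+nk})=b^ja^{j+(m+n)k}$ for all $m,n\in\mathbb{N}$, so the subset $\{(b^ja^{j+k})^n\colon n\in\mathbb{N}\}$ is closed under the bicyclic operation and the map $n\mapsto b^ja^{j+nk}$ already embeds $(\mathbb{N},+)$ into $\mathscr{C}_{+}(a,b)$.

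\textbf{Step 2 (the role of $b^ia^i$).} Here the hypothesis $j\geqslant i$ enters: together with $nk\geqslant 0$ it forces $\min\{i,j\}=i$ and $\min\{j+nk,i\}=i$. A direct calculation then gives $b^ia^i\cdot b^ja^{j+nk}=b^ja^{j+nk}\cdot b^ia^i=b^ja^{j+nk}$, while $b^ia^i\cdot b^ia^i=b^ia^i$. Thus $S_{i,j,k}$ is closed under multiplication and $b^ia^i$ acts as a two-sided identity on it, corresponding to $0\in(\omega,+)$.

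\textbf{Step 3 (isomorphism).} Combining Steps 1 and 2, the map $\psi$ is a semigroup homomorphism: products of the $(b^ja^{j+k})^n$ translate into addition of the $n$'s, and the idempotent $b^ia^i$ translates into $0$. Injectivity of $\psi$ follows from the fact that in the bicyclic monoid $b^pa^q=b^{p'}a^{q'}$ only when $(p,q)=(p',q')$: the elements $b^ja^{j+nk}$ are therefore pairwise distinct for distinct $n\in\mathbb{N}$ (with $k\geqslant 1$, as the statement implicitly intends), and the equality $b^ja^{j+nk}=b^ia^i$ would force $j=i$ and $nk=0$, excluded by $n\in\mathbb{N}$. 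Surjectivity is clear from the definition.

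I do not anticipate a serious obstacle: the argument is entirely a matter of tracking the $\min$ in the bicyclic formula and invoking $j\geqslant i$ at precisely the point where an idempotent $b^ia^i$ outside the monogenic piece needs to be absorbed as an identity. The only subtle bookkeeping item is verifying that the inequality $j\geqslant i$ makes both one-sided products $b^ia^i\cdot b^ja^{j+nk}$ and $b^ja^{j+nk}\cdot b^ia^i$ collapse to $b^ja^{j+nk}$ simultaneously.
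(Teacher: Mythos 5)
Your proposal is correct and follows essentially the same route as the paper: the paper defines the inverse bijection $\mathfrak{J}_{i,j,k}\colon(\omega,+)\to S_{i,j,k}$ sending $0\mapsto b^ia^i$ and $n\mapsto(b^ja^{j+k})^n$ and leaves the check as ``simple verifications,'' which are exactly the three $\min$-computations you carry out. Your parenthetical observation that the statement implicitly requires $k\geqslant 1$ (for $k=0$ the set $S_{i,j,0}$ collapses to at most two idempotents) is a legitimate caveat that the paper's formulation with $k\in\omega$ glosses over.
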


\begin{proof}
We define the mapping $\mathfrak{J}_{i,j,k}\colon (\omega,+)\to S_{i,j,k}$ by the formula
\begin{equation*}
  (n)\mathfrak{J}_{i,j,k}=
  \left\{
    \begin{array}{ll}
      b^ia^i,         & \hbox{if~} n=0;\\
      (b^ja^{j+k})^n, & \hbox{if~} n>0.
    \end{array}
  \right.
\end{equation*}
Simple verifications show that such defined map $\mathfrak{J}_{i,j,k}$ is a bijective homomorphism.
\end{proof}

\begin{definition}\label{definition-2.3}
For arbitrary $l\in\omega$ and $m\in\mathbb{N}$ we define the map $\sigma_{l,m}\colon\mathscr{C}_{+}(a,b)\to S_{0,l,m}$ by the formula $(b^ia^j)\sigma_{l,m}=((b^ia^j)\mathfrak{h}_{\mathfrak{C}_{\mathrm{mg}}})\mathfrak{J}_{0,l,m}$, $i\leqslant j$, $i,j\in\omega$. Since $\mathfrak{h}_{\mathfrak{C}_{\mathrm{mg}}}$  and $\mathfrak{J}_{0,l,m}\colon (\omega,+)\to S_{0,l,m}$ are homomorphisms, $\sigma_{l,m}$ is a homomorphism, too.
Simple verifications show that
\begin{equation*}
(b^ia^j)\sigma_{l,m}=
\left\{
  \begin{array}{ll}
    1,               & \hbox{if~} i=j; \\
    b^la^{l+m(j-i)}, & \hbox{if~} i<j
  \end{array}
\right.
\end{equation*}
for all $i,j\in\omega$.
\end{definition}

We observe that every elements of the semigroup $\mathscr{C}_{+}(a,b)$ can be represented in the form $b^{i}a^{i+j}$ for some $i,j\in\omega$. Then for any positive integer $n$ we have that
\begin{align*}
  (b^{i}a^{i+j})^n&=b^{i}a^{i+j}\cdot b^{i}a^{i+j}\cdot(b^{i}a^{i+j})^{n-2}= \\
   &=b^{i}a^{i+2j}\cdot(b^ia^j)^{n-2}=\\
   &=\cdots=\\
   &=b^{i}a^{i+nj}.
\end{align*}

\begin{lemma}\label{lemma-2.4}
$\sigma_{l_1,m_1}\circ\sigma_{l_2,m_2}=\sigma_{l_2,m_1m_2}$ for arbitrary $l_1,l_2\in\omega$ and $m_1,m_2\in\mathbb{N}$.
\end{lemma}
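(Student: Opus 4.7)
The plan is to verify the identity pointwise using the explicit formula $(b^ia^j)\sigma_{l,m}=b^la^{l+m(j-i)}$ established in Example~\ref{example-2.3}. Since both sides of the claimed equality are self-maps of $\mathscr{C}_{+}(a,b)$, it suffices to evaluate them on an arbitrary element $b^ia^j$ with $i\leqslant j$ and compare.

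First I would apply $\sigma_{l_1,m_1}$ to $b^ia^j$, obtaining $b^{l_1}a^{l_1+m_1(j-i)}$, which is an element of $\mathscr{C}_{+}(a,b)$ of the form $b^{i'}a^{j'}$ with $i'=l_1$ and $j'=l_1+m_1(j-i)$. The key observation is that the difference $j'-i'$ equals $m_1(j-i)$, so the parameter $l_1$ cancels out at this stage. Next I would apply $\sigma_{l_2,m_2}$ to $b^{i'}a^{j'}$ using the same formula, which yields $b^{l_2}a^{l_2+m_2(j'-i')}=b^{l_2}a^{l_2+m_1m_2(j-i)}$. By the formula of Example~\ref{example-2.3} this is exactly $(b^ia^j)\sigma_{l_2,m_1m_2}$, establishing the desired equality.

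There is no real obstacle here; the lemma is a direct computation, and the only slightly non-obvious point worth highlighting in the write-up is the cancellation of $l_1$ when passing through the second map, which explains why the composite depends on $l_2$ but not on $l_1$. Since $m_1,m_2\in\mathbb{N}$ guarantees $m_1m_2\in\mathbb{N}$, the right-hand side $\sigma_{l_2,m_1m_2}$ is a well-defined element of the family introduced in Example~\ref{example-2.3}, so no additional checks are needed.
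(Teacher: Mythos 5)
Your argument is correct and is essentially the paper's own proof in streamlined form: the paper also verifies the identity pointwise, splitting into the cases $i=j$ and $i<j$, and in the latter case performs exactly the computation you describe (the exponent difference $m_1(j-i)$ is what gets fed into the second map, so $l_1$ disappears). One small caveat: the closed-form formula $(b^ia^j)\sigma_{l,m}=b^{l}a^{l+m(j-i)}$ that you quote from Example~\ref{example-2.3} is only literally consistent with the definition $(b^ia^j)\sigma_{l,m}=((b^ia^j)\mathfrak{h})\mathfrak{J}_{0,l,m}$ when $i<j$; for $i=j$ the image of $0$ under $\mathfrak{J}_{0,l,m}$ is $b^0a^0=1$, not $b^{l}a^{l}$, which is why the paper's proof treats the diagonal case separately. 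This does not affect the conclusion --- in the case $i=j$ both $((b^ia^i)\sigma_{l_1,m_1})\sigma_{l_2,m_2}$ and $(b^ia^i)\sigma_{l_2,m_1m_2}$ equal $1$ --- but for a fully rigorous write-up you should either handle $i=j$ explicitly or note that the formula you are invoking needs the convention $b^{l}a^{l}:=1$ there.
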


\begin{proof}
Fix an arbitrary  $b^ia^{j}\in \mathscr{C}_{+}(a,b)$, $i,j\in\omega$. Then we have that
\begin{align*}
  ((b^ia^j)\sigma_{l_1,m_1})\sigma_{l_2,m_2}&=
\left\{
  \begin{array}{ll}
    (1)\sigma_{l_2,m_2},                          & \hbox{if~} i=j;\\
    ((b^{l_1}a^{l_1+m_1})^{j-i})\sigma_{l_2,m_2}, & \hbox{if~} i<j
  \end{array}
\right.=\\
   &=
   \left\{
  \begin{array}{ll}
    1,                                         & \hbox{if~} i=j;\\
    (b^{l_1}a^{l_1+(j-i)m_1})\sigma_{l_2,m_2}, & \hbox{if~} i<j
  \end{array}
\right.=\\
   &=
   \left\{
  \begin{array}{ll}
    1,                               & \hbox{if~} i=j;\\
    (b^{l_2}a^{l_2+m_2})^{(j-i)m_1}, & \hbox{if~} i<j
  \end{array}
\right.=\\
   &=
   \left\{
  \begin{array}{ll}
    1,                          & \hbox{if~} i=j;\\
    b^{l_2}a^{l_2+(j-i)m_1m_2}, & \hbox{if~} i<j
  \end{array}
\right.=\\
   &=(b^ia^j)\sigma_{l_2,m_1m_2}.
\end{align*}

\end{proof}

By $\mathfrak{End}_{\langle\sigma\rangle}(\mathscr{C}_{+}(a,b))$  we denote the subsemigroup of $\mathfrak{End}(\mathscr{C}_{+}(a,b))$, which is generated by the family $\left\{\sigma_{l,m}\colon l,m\in\omega, \; m>0\right\}$ of endomorphisms of the monoid $\mathscr{C}_{+}(a,b)$.

By $\mathfrak{RZ}(\omega)$ we denote the set $\omega$ with the right-zero multiplication, i.e., $xy=y$ for all $x,y\in\omega$, and by $(\mathbb{N},*)$ the multiplicative semigroup of positive integers. We define the map $\mathfrak{I}\colon\mathfrak{End}_{\langle\sigma\rangle}(\mathscr{C}_{+}(a,b))\to \mathfrak{RZ}(\omega)\times (\mathbb{N},*)$ by the formula $(\sigma_{l,m})\mathfrak{I}=(l,m)$, $l\in\omega$, $m\in\mathbb{N}$.
Lemma~\ref{lemma-2.4} implies that such defined map $\mathfrak{I}$ is a semigroup homomorphism, and moreover $\mathfrak{I}$ is bijective. Hence we get the following proposition.

\begin{proposition}\label{proposition-2.5}
The semigroup $\mathfrak{End}_{\langle\sigma\rangle}(\mathscr{C}_{+}(a,b))$ is isomorphic to the direct product $\mathfrak{RZ}(\omega)\times (\mathbb{N},*)$.
\end{proposition}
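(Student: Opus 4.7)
The plan is to observe that Lemma~\ref{lemma-2.4} has essentially done all the work, and that what remains is a bookkeeping verification that the map $\mathfrak{I}$ is well defined, bijective, and multiplicative. First, I would note that Lemma~\ref{lemma-2.4} implies the family $F=\{\sigma_{l,m}\colon l\in\omega,\ m\in\mathbb{N}\}$ is already closed under composition, so the subsemigroup $\mathfrak{End}_{\langle\sigma\rangle}(\mathscr{C}_{+}(a,b))$ generated by $F$ coincides with $F$ itself. Hence every element of $\mathfrak{End}_{\langle\sigma\rangle}(\mathscr{C}_{+}(a,b))$ has the form $\sigma_{l,m}$ for some uniquely determined pair $(l,m)$, with uniqueness to be checked below.

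Next I would check that $\mathfrak{I}$ is well defined, i.e., distinct pairs $(l,m)\in\omega\times\mathbb{N}$ yield distinct endomorphisms $\sigma_{l,m}$. By Example~\ref{example-2.3}, $(b^ia^j)\sigma_{l,m}=b^la^{l+m(j-i)}$, so evaluating at the generator $a=b^0a^1$ gives $(a)\sigma_{l,m}=b^la^{l+m}$. Thus $\sigma_{l_1,m_1}=\sigma_{l_2,m_2}$ forces $l_1=l_2$ and then $m_1=m_2$, which gives injectivity of $\mathfrak{I}$; surjectivity is immediate from the definition.

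Finally I would verify that $\mathfrak{I}$ is a semigroup homomorphism. Recalling that the operation on $\mathfrak{RZ}(\omega)\times(\mathbb{N},*)$ is $(l_1,m_1)\cdot(l_2,m_2)=(l_2,m_1m_2)$, Lemma~\ref{lemma-2.4} gives
\begin{equation*}
  (\sigma_{l_1,m_1}\circ\sigma_{l_2,m_2})\mathfrak{I}=(\sigma_{l_2,m_1m_2})\mathfrak{I}=(l_2,m_1m_2)=(l_1,m_1)\cdot(l_2,m_2)=(\sigma_{l_1,m_1})\mathfrak{I}\cdot(\sigma_{l_2,m_2})\mathfrak{I},
\end{equation*}
so $\mathfrak{I}$ is a homomorphism, and combined with the previous step, an isomorphism.

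There is no real obstacle here; the only subtlety worth flagging is verifying well-definedness of $\mathfrak{I}$, which is why I would evaluate $\sigma_{l,m}$ on a single convenient element (namely $a$) to read off both parameters $l$ and $m$ uniquely from the image.
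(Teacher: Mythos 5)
Your proof is correct and follows essentially the same route as the paper: the paper likewise defines $\mathfrak{I}\colon\sigma_{l,m}\mapsto(l,m)$ and cites Lemma~\ref{lemma-2.4} for the homomorphism property and bijectivity. Your additional checks (closure of the family under composition, and recovering $(l,m)$ from $(a)\sigma_{l,m}=b^la^{l+m}$ to justify well-definedness and injectivity) are exactly the ``simple verifications'' the paper leaves implicit.
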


Fix an arbitrary  $b^ia^{j}\in \mathscr{C}_{+}(a,b)$, $i,j\in\omega$, $j\geqslant i$. Then for any $\sigma_{l,m}\in \mathfrak{End}_{\langle\sigma\rangle}(\mathscr{C}_{+}(a,b))$ and any $\lambda_k\in \mathfrak{End}_{\langle\lambda\rangle}(\mathscr{C}_{+}(a,b))$ we have that
\begin{align*}
  ((b^ia^j)\sigma_{l,m})\lambda_k &=
\left\{
  \begin{array}{ll}
    (1)\lambda_k,                          & \hbox{if~} i=j;\\
    ((b^{l}a^{l+m})^{j-i})\lambda_k, & \hbox{if~} i<j
  \end{array}
\right.=\\
   &=
\left\{
  \begin{array}{ll}
    1,                               & \hbox{if~} i=j;\\
    (b^{l}a^{l+m(j-i)})\lambda_k, & \hbox{if~} i<j
  \end{array}
\right.=\\
   &=
\left\{
  \begin{array}{ll}
    1,                    & \hbox{if~} i=j;\\
    b^{kl}a^{kl+km(j-i)}, & \hbox{if~} i<j
  \end{array}
\right.=\\
   &=(b^ia^j)\sigma_{kl,km}
\end{align*}
and
\begin{align*}
  ((b^ia^j)\lambda_k)\sigma_{l,m} &=(b^{ki}a^{kj})\sigma_{l,m}= \\
   &=
 \left\{
  \begin{array}{ll}
    (b^{ki}a^{ki})\sigma_{l,m}, & \hbox{if~} ki=kj;\\
    (b^{l}a^{l+m})^{kj-ki},     & \hbox{if~} ki<kj
  \end{array}
\right.=  \\
   &=
 \left\{
  \begin{array}{ll}
    (b^{ki}a^{ki})\sigma_{l,m}, & \hbox{if~} i=j;\\
    (b^{l}a^{l+m})^{kj-ki},     & \hbox{if~} i<j
  \end{array}
\right.=  \\
   &=
\left\{
  \begin{array}{ll}
    1,                    & \hbox{if~} i=j;\\
    b^{l}a^{l+km(j-i)}, & \hbox{if~} i<j
  \end{array}
\right.=\\
   &=(b^ia^j)\sigma_{l,km}.
\end{align*}

This implies the following

\begin{proposition}\label{proposition-2.6}
$\sigma_{l,m}\lambda_k=\sigma_{kl,km}$ and $\lambda_k\sigma_{l,m}=\sigma_{l,km}$
for any $\sigma_{l,m}\in \mathfrak{End}_{\langle\sigma\rangle}(\mathscr{C}_{+}(a,b))$ and any $\lambda_k\in \mathfrak{End}_{\langle\lambda\rangle}(\mathscr{C}_{+}(a,b))\setminus\{\lambda_0\}$.
\end{proposition}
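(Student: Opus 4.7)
The plan is a direct verification on an arbitrary generator $b^{i}a^{j}\in\mathscr{C}_{+}(a,b)$ using the two closed-form expressions already at hand: $(b^{i}a^{j})\sigma_{l,m}=b^{l}a^{l+m(j-i)}$ (valid for $i<j$, and equal to $1$ when $i=j$) from Example~\ref{example-2.3}, together with $(b^{i}a^{j})\lambda_{k}=b^{ki}a^{kj}$ from the definition of $\lambda_{k}$.

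For the first identity $\sigma_{l,m}\lambda_{k}=\sigma_{kl,km}$, I would apply $\sigma_{l,m}$ first to produce $b^{l}a^{l+m(j-i)}$ and then apply $\lambda_{k}$, which multiplies both exponents by $k$ and yields $b^{kl}a^{kl+km(j-i)}$; this is exactly $(b^{i}a^{j})\sigma_{kl,km}$. For the second identity $\lambda_{k}\sigma_{l,m}=\sigma_{l,km}$, I would apply $\lambda_{k}$ first to obtain $b^{ki}a^{kj}$ and then feed this into $\sigma_{l,m}$; the key observation is that $kj-ki=k(j-i)$, so the factor $k$ is absorbed into the $m$-slot (not the $l$-slot), giving $b^{l}a^{l+km(j-i)}=(b^{i}a^{j})\sigma_{l,km}$. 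In the degenerate case $i=j$ both compositions send $b^{i}a^{i}$ to $1$, which agrees with the $i=j$ values of $\sigma_{kl,km}$ and $\sigma_{l,km}$, so the case split is routine.

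There is no real obstacle here; the statement is entirely computational and amounts to reading off the displayed chain of equalities that immediately precedes it. The only conceptual point worth emphasizing is the asymmetry between the two identities: post-composing by $\lambda_{k}$ rescales \emph{both} parameters of $\sigma_{l,m}$ (since $\lambda_{k}$ multiplies all exponents uniformly), whereas pre-composing by $\lambda_{k}$ rescales only the $m$-parameter (since the constant $l$ in the output of $\sigma_{l,m}$ does not depend on the input exponents, while $k$ enters only through the difference $j-i$).
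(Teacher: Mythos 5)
Your verification is correct and is essentially identical to the paper's own proof, which carries out the same two case-split computations on a generator $b^{i}a^{j}$ (the displayed chains of equalities immediately preceding the proposition) using $(b^{i}a^{j})\sigma_{l,m}=b^{l}a^{l+m(j-i)}$ and $(b^{i}a^{j})\lambda_{k}=b^{ki}a^{kj}$. The only caveat, shared equally by the paper, is that the case $k=0$ is implicitly excluded since $\sigma_{kl,km}$ and $\sigma_{l,km}$ require a positive second parameter.
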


By $\mathfrak{End}_{\langle\lambda,\sigma\rangle}(\mathscr{C}_{+}(a,b))$  we denote the subsemigroup of $\mathfrak{End}(\mathscr{C}_{+}(a,b))$, which is generated by the families  $\left\{\lambda_k\colon k{\in}\omega\right\}$ and $\left\{\sigma_{l,m}\colon l,m\in\omega,  m>0\right\}$ of endomorphisms of the monoid $\mathscr{C}_{+}(a,b)$. We summarise the results of this section in the following theorem.

\begin{theorem}\label{theorem-2.7}
\begin{enumerate}
  \item[(1)]\label{theorem-2.7-1} $\lambda_1$ is the identity element of the semigroup \linebreak $\mathfrak{End}(\mathscr{C}_{+}(a,b))$, and hence it is the identity element of the semigroups $\mathfrak{End}_{\langle\lambda\rangle}(\mathscr{C}_{+}(a,b))$ and $\mathfrak{End}_{\langle\lambda,\sigma\rangle}(\mathscr{C}_{+}(a,b))$;
  \item[(2)]\label{theorem-2.7-2} $\lambda_0$ is the zero of the semigroup $\mathfrak{End}(\mathscr{C}_{+}(a,b))$, and hence it is the zero of $\mathfrak{End}_{\langle\lambda\rangle}(\mathscr{C}_{+}(a,b))$ and $\mathfrak{End}_{\langle\lambda,\sigma\rangle}(\mathscr{C}_{+}(a,b))$;
  \item[(3)]\label{theorem-2.7-3} the set $I=\mathfrak{End}_{\langle\sigma\rangle}(\mathscr{C}_{+}(a,b))\cup\{\lambda_0\}$ is an ideal of the semigroup $\mathfrak{End}_{\langle\lambda,\sigma\rangle}(\mathscr{C}_{+}(a,b))$.
\end{enumerate}
\end{theorem}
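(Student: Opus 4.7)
The plan is to handle the three items in turn, each time exploiting the explicit formulas for $\lambda_k$ and $\sigma_{l,m}$ that were set up earlier in Section~2.

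For item (1), since $(b^ia^j)\lambda_1 = b^ia^j$, the map $\lambda_1$ is literally the identity self-map of $\mathscr{C}_{+}(a,b)$, so $\lambda_1\varepsilon = \varepsilon\lambda_1 = \varepsilon$ for every $\varepsilon \in \mathfrak{End}(\mathscr{C}_{+}(a,b))$. Because $\lambda_1$ lies in both $\mathfrak{End}_{\langle\lambda\rangle}(\mathscr{C}_{+}(a,b))$ and $\mathfrak{End}_{\langle\lambda,\sigma\rangle}(\mathscr{C}_{+}(a,b))$, it serves as their identity as well.

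For item (2), I would use that any monoid endomorphism $\varepsilon$ of $\mathscr{C}_{+}(a,b)$ satisfies $(1)\varepsilon = 1$. For every $x \in \mathscr{C}_{+}(a,b)$ one then has $(x)(\lambda_0\varepsilon) = ((x)\lambda_0)\varepsilon = (1)\varepsilon = 1 = (x)\lambda_0$ and symmetrically $(x)(\varepsilon\lambda_0) = ((x)\varepsilon)\lambda_0 = 1 = (x)\lambda_0$, so $\lambda_0$ absorbs every endomorphism on both sides. Since $\lambda_0$ belongs to both subsemigroups, the conclusion transfers at once.

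For item (3), my plan is to verify directly that $I$ is stable under left and right multiplication by the generators $\lambda_k$ ($k\in\omega$) and $\sigma_{l,m}$ ($l\in\omega$, $m\in\mathbb{N}$) of $\mathfrak{End}_{\langle\lambda,\sigma\rangle}(\mathscr{C}_{+}(a,b))$. If $\alpha = \lambda_0$, closure follows from item~(2). If $\alpha = \sigma_{l,m}$ with $m\geq 1$, Proposition~\ref{proposition-2.6} gives $\alpha\lambda_k = \sigma_{kl,km}$ and $\lambda_k\alpha = \sigma_{l,km}$, which lie in $\mathfrak{End}_{\langle\sigma\rangle}(\mathscr{C}_{+}(a,b))$ whenever $k\geq 1$, while for $k=0$ both products collapse to $\lambda_0$ by item~(2); Lemma~\ref{lemma-2.4} disposes of products of two $\sigma$'s, which remain in $\mathfrak{End}_{\langle\sigma\rangle}(\mathscr{C}_{+}(a,b))$. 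A straightforward induction on word length in the generators then promotes closure against generators to closure against arbitrary elements of $\mathfrak{End}_{\langle\lambda,\sigma\rangle}(\mathscr{C}_{+}(a,b))$.

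I do not anticipate any genuine obstacle; the only point worth watching is the $k=0$ degeneracy in Proposition~\ref{proposition-2.6}, and indeed it is precisely this degeneracy that forces us to adjoin $\lambda_0$ to $\mathfrak{End}_{\langle\sigma\rangle}(\mathscr{C}_{+}(a,b))$ in order to obtain a set closed under multiplication from both sides.
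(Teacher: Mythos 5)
Your proposal is correct and follows essentially the same route as the paper, which dismisses items (1) and (2) as trivial and derives item (3) from Proposition~\ref{proposition-2.6} together with the definition of $\lambda_0$ --- exactly the ingredients you invoke. Your write-up merely makes explicit the details the paper leaves to the reader, including the useful observation that the $k=0$ degeneracy is what forces $\lambda_0$ to be adjoined to $\mathfrak{End}_{\langle\sigma\rangle}(\mathscr{C}_{+}(a,b))$ to obtain an ideal.
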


\begin{proof}
Statements \eqref{theorem-2.7-1} and \eqref{theorem-2.7-2} are trivial.

\eqref{theorem-2.7-3} By Proposition~\ref{proposition-2.6} we have that $\sigma_{l,m}\lambda_k,\lambda_k\sigma_{l,m}\in \mathfrak{End}_{\langle\sigma\rangle}(\mathscr{C}_{+}(a,b))$ for any $\sigma_{l,m}\in \mathfrak{End}_{\langle\sigma\rangle}(\mathscr{C}_{+}(a,b))$ and  $\lambda_k\in \mathfrak{End}_{\langle\lambda\rangle}(\mathscr{C}_{+}(a,b))\setminus\{\lambda_0\}$. Since $\lambda_0$ is the zero of the semigroup $\mathfrak{End}(\mathscr{C}_{+}(a,b))$, the above arguments imply that $I$ is an ideal of the semigroup $\mathfrak{End}_{\langle\lambda,\sigma\rangle}(\mathscr{C}_{+}(a,b))$.
\end{proof}

\section{On monoid injective endomorphisms of $\mathscr{C}_{+}(a,b)$}\label{section-3}

\begin{example}\label{example-3.1}
For arbitrary positive integer $n$ and arbitrary ${s{=}0,\ldots,n{-}1}$ we define the mapping $\lambda_{n,s}\colon\mathscr{C}_{+}(a,b)\to \mathscr{C}_{+}(a,b)$ by the formula
\begin{equation*}
  (b^ia^j)\lambda_{n,s}=
  \left\{
    \begin{array}{ll}
      a^{nj},           & \hbox{if~} i=0;\\
      b^{ni-s}a^{nj-s}, & \hbox{if~} i\neq 0,
    \end{array}
  \right.
\end{equation*}
for all $i,j\in\omega$.
\end{example}

\begin{proposition}\label{proposition-3.2}
For any positive integer $n$ and any $s=0,\ldots,n-1$ the map $\lambda_{n,s}$  is an injective monoid endomorphism of the monoid $\mathscr{C}_{+}(a,b)$.
\end{proposition}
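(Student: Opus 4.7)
My plan is to verify three things for $\lambda_{n,s}$: that its image lies in $\mathscr{C}_{+}(a,b)$, that it preserves the monoid operation, and that it is injective.

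For well-definedness and identity-preservation, observe that when $i=0$ the image $a^{nj}=b^0a^{nj}$ lies in $\mathscr{C}_{+}(a,b)$, and when $i\geqslant 1$ the constraints $0\leqslant s\leqslant n-1$ and $i\leqslant j$ give $1\leqslant ni-s\leqslant nj-s$, so $b^{ni-s}a^{nj-s}\in\mathscr{C}_{+}(a,b)$. The identity $1=b^0a^0$ is sent to $a^0=1$.

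For the homomorphism property, I would fix $x=b^{i_1}a^{j_1}$, $y=b^{i_2}a^{j_2}$ in $\mathscr{C}_{+}(a,b)$, set $m=\min\{j_1,i_2\}$ so that by the bicyclic multiplication rule $xy=b^{i_1+i_2-m}a^{j_1+j_2-m}$, and verify $(xy)\lambda_{n,s}=(x)\lambda_{n,s}\cdot(y)\lambda_{n,s}$ by splitting into the four cases according to whether $i_1$ and $i_2$ are zero. In each case $(x)\lambda_{n,s}\cdot(y)\lambda_{n,s}$ is evaluated by a single reduction of the middle factor $a^p b^q$ via $ab=1$, and the resulting exponents are compared to those produced by $(xy)\lambda_{n,s}$ after checking whether $i_1+i_2-m$ is zero or positive.

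For injectivity, suppose $(b^{i_1}a^{j_1})\lambda_{n,s}=(b^{i_2}a^{j_2})\lambda_{n,s}$. If $i_1=i_2=0$, equating the $a$-exponents gives $j_1=j_2$; if both are nonzero, equating $b$- and $a$-exponents separately yields $i_1=i_2$ and $j_1=j_2$; the mixed case $i_1=0\neq i_2$ would force $ni_2-s=0$, i.e., $s=ni_2\geqslant n$, contradicting $s\leqslant n-1$, so it cannot occur.

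The main obstacle I foresee is the homomorphism verification at the borderline $j_1=i_2$ with $i_1=0$, $i_2\neq 0$: here $xy$ drops into the $i=0$ branch of the definition (its image is a pure power of $a$) while $(x)\lambda_{n,s}$ and $(y)\lambda_{n,s}$ sit in different branches, so the reduction $a^{nj_1}b^{ni_2-s}=a^s$ must combine with the trailing factor $a^{nj_2-s}$ to cancel the $s$-shift exactly; handling both $s=0$ and $s\geqslant 1$ uniformly in this computation is where I expect the bookkeeping to be most delicate.
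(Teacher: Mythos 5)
Your proposal is correct and follows essentially the same route as the paper: a direct case analysis of the product according to whether the $b$-exponents vanish, with the observation that $0\leqslant s\leqslant n-1$ resolves the borderline case (the paper's corresponding step is the reduction of the condition $mn\geqslant ni-s$ to $m\geqslant i$). Your injectivity argument is in fact spelled out more explicitly than the paper's one-line remark, and your identification of the delicate case $i_1=0$, $j_1=i_2\neq 0$ is exactly where the $s$-shift cancellation matters.
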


\begin{proof}
Fix any positive integers $i,j,k,l$ such that $i\leqslant j$ and $k\leqslant l$, and non-negative integers $m$ and $q$. Then we have that
\begin{align*}
  (b^ia^j\cdot b^ka^l)\lambda_{n,s}&=
\left\{
  \begin{array}{ll}
    (b^{i-j+k}a^{l})\lambda_{n,s}, & \hbox{if~} j<k; \\
    (b^ia^l)\lambda_{n,s},         & \hbox{if~} j=k; \\
    (b^ia^{j-k+l})\lambda_{n,s},   & \hbox{if~} j>k
  \end{array}
\right.=
\\
   &=
\left\{
  \begin{array}{ll}
    b^{n(i-j+k)-s}a^{nl-s},  & \hbox{if~} j<k; \\
    b^{ni-s}a^{nl-s},        & \hbox{if~} j=k; \\
    b^{ni-s}a^{n(j-k+l)-s},  & \hbox{if~} j>k,
  \end{array}
\right.
\end{align*}
\begin{align*}
  (b^ia^j)\lambda_{n,s}&\cdot (b^ka^l)\lambda_{n,s}=b^{ni-s}a^{nj-s}\cdot b^{nk-s}a^{nl-s}= \\
   &=
\left\{
  \begin{array}{ll}
    b^{(ni-s)-(nj-s)+(nk-s)}a^{nl-s},  & \hbox{if~} nj-s<nk-s; \\
    b^{ni-s}a^{nl-s},                  & \hbox{if~} nj-s=nk-s; \\
    b^{ni-s}a^{nj-s-(nk-s)+(nl-s)},    & \hbox{if~} nj-s>nk-s
  \end{array}
\right.=
\\
   &=
\left\{
  \begin{array}{ll}
    b^{n(i-j+k)-s}a^{nl-s},  & \hbox{if~} j<k; \\
    b^{ni-s}a^{nl-s},        & \hbox{if~} j=k; \\
    b^{ni-s}a^{n(j-k+l)-s},  & \hbox{if~} j>k,
  \end{array}
\right.
\end{align*}
\begin{align*}
  (b^ia^j\cdot a^m)\lambda_{n,s}&=(b^ia^{j+m})\lambda_{n,s}=\\
   &=b^{ni-s}a^{n(j+m)-s}= \\
   &=b^{ni-s}a^{nj-s}\cdot a^{mn}=\\
   &=(b^ia^j)\lambda_{n,s}\cdot (a^m)\lambda_{n,s},
\end{align*}
\begin{align*}
  (a^m\cdot b^ia^j)\lambda_{n,s}&=
\left\{
  \begin{array}{ll}
    (b^{i-m}a^j)\lambda_{n,s}, & \hbox{if~} m<i;\\
    (a^{m-i+j})\lambda_{n,s},  & \hbox{if~} m\geqslant i
  \end{array}
\right.=
\\
   &=
\left\{
  \begin{array}{ll}
    b^{n(i-m)-s}a^{nj-s}, & \hbox{if~} m<i;\\
    a^{n(m-i+j)},         & \hbox{if~} m\geqslant i
  \end{array}
\right.
\end{align*}
\begin{align*}
  (a^m)\lambda_{n,s}\cdot (b^ia^j)\lambda_{n,s}&= a^{mn}\cdot b^{ni-s}a^{nj-s}= \\
   &=
\left\{
  \begin{array}{ll}
    b^{ni-s-nm}a^{nj-s}, & \hbox{if~} mn<ni-s \\
     a^{mn-(ni-s)+nj-s}, & \hbox{if~} mn\geqslant ni-s
  \end{array}
\right.=
\\
\end{align*}
\begin{align*}
   &=
\left\{
  \begin{array}{ll}
    b^{n(i-m)-s}a^{nj-s}, & \hbox{if~} m<i-s/n;\\
    a^{n(m-i+j)},         & \hbox{if~} m\geqslant i-s/n
  \end{array}
\right. =
\\
   &=
\left\{
  \begin{array}{ll}
    b^{n(i-m)-s}a^{nj-s}, & \hbox{if~} m<i;\\
    a^{n(m-i+j)},         & \hbox{if~} m\geqslant i,
  \end{array}
\right.
\end{align*}
because $s=0,\ldots,n-1$, and
\begin{equation*}
  (a^m\cdot a^q)\lambda_{n,s}=(a^{m+q})\lambda_{n,s}=a^{(m+q)n}=a^{mn}\cdot a^{qn}=
  (a^m)\lambda_{n,s}\cdot (a^q)\lambda_{n,s}.
\end{equation*}
Hence the map $\lambda_{n,s}$  is a monoid endomorphism. The condition that $s=0,\ldots,n-1$ implies that $\lambda_{n,s}$ is an injective map.
\end{proof}

By $\mathfrak{End}_{\langle\lambda^\infty\rangle}(\mathscr{C}_{+}(a,b))$  we denote the subset of $\mathfrak{End}(\mathscr{C}_{+}(a,b))$, which consists of the elements of the family $\left\{\lambda_{n,s}\colon n\in\omega, \; s=0,\ldots,n-1\right\}$ of endomorphisms of the monoid $\mathscr{C}_{+}(a,b)$.

Let $S$ and $T$ be arbitrary semigroups. Let $\varphi\colon T\to\mathfrak{End}(S)$, $t\mapsto \varphi_t$ be a homomorphism from $T$ into the semigroup $\mathfrak{End}(S)$ of endomorphisms of $S$. The \emph{semidirect product} of $S$ and $T$ defined on the product $S\times T$ with the semigroup operation
\begin{equation*}
  (s_1,t_1)\cdot (s_2,t_2)=(s_1\cdot (s_2)\varphi_{t_1},t_1\cdot t_2),
\end{equation*}
and it is denoted by $S\rtimes_\varphi T$.

\begin{theorem}\label{theorem-3.3}
The set $\mathfrak{End}_{\langle\lambda^\infty\rangle}(\mathscr{C}_{+}(a,b))$ is a submonoid of $\mathfrak{End}(\mathscr{C}_{+}(a,b))$ and $\mathfrak{End}_{\langle\lambda^\infty\rangle}(\mathscr{C}_{+}(a,b))$ is
isomorphic to a submonoid of the semidirect product $(\mathbb{N},*)\rtimes_\varphi(\omega, +)$, where $(p)\varphi_n=np$.
\end{theorem}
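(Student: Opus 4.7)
The plan is to establish both parts by direct computation with the formula from Example 3.1, then exhibit an explicit embedding into the semidirect product.

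First I would verify closure and the submonoid property by computing $\lambda_{n_1,s_1}\circ\lambda_{n_2,s_2}$. Splitting along the defining cases: for $i=0$, applying $\lambda_{n_1,s_1}$ gives $a^{n_1 j}$, and then $\lambda_{n_2,s_2}$ sends this to $a^{n_1 n_2 j}$. For $i\neq 0$, applying $\lambda_{n_1,s_1}$ yields $b^{n_1 i - s_1}a^{n_1 j - s_1}$; the point to check is that $n_1 i - s_1 \geqslant n_1 - (n_1-1) = 1$, so the second application of $\lambda_{n_2,s_2}$ again falls into the $i\neq 0$ branch, producing
\begin{equation*}
   b^{n_2(n_1 i - s_1) - s_2}a^{n_2(n_1 j - s_1) - s_2} = b^{n_1 n_2 i - (n_2 s_1 + s_2)}a^{n_1 n_2 j - (n_2 s_1 + s_2)}.
\end{equation*}
Thus $\lambda_{n_1,s_1}\circ\lambda_{n_2,s_2}=\lambda_{n_1 n_2,\, n_2 s_1+s_2}$. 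The range constraint is satisfied because $n_2 s_1+s_2\leqslant n_2(n_1-1)+(n_2-1)=n_1 n_2-1$, so the composite endomorphism lies back in the family. The identity self-map coincides with $\lambda_{1,0}$ (both branches of the formula reduce to the input), so $\mathfrak{End}_{\langle\lambda^\infty\rangle}(\mathscr{C}_{+}(a,b))$ is a submonoid of $\mathfrak{End}(\mathscr{C}_{+}(a,b))$.

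Next I would exhibit the embedding. Define $\Theta\colon\mathfrak{End}_{\langle\lambda^\infty\rangle}(\mathscr{C}_{+}(a,b))\to (\mathbb{N},*)\rtimes_\varphi(\omega,+)$ by $(\lambda_{n,s})\Theta=(n,s)$. The composition rule established above translates precisely into the semidirect-product law governed by the action $\varphi_n\colon p\mapsto np$: the $(\mathbb{N},*)$-coordinates multiply to $n_1 n_2$, while the $(\omega,+)$-coordinates combine as $n_2 s_1+s_2$, which is exactly the action of $\varphi$ coupled with additive accumulation in $\omega$. Injectivity of $\Theta$ is immediate since the pair $(n,s)$ may be recovered from $\lambda_{n,s}$ by evaluating at $b a$ (say), and $\Theta$ sends $\lambda_{1,0}$ to the identity of the semidirect product. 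Therefore the image of $\Theta$ is the submonoid
\begin{equation*}
   \{(n,s)\in \mathbb{N}\times\omega\colon 0\leqslant s\leqslant n-1\},
\end{equation*}
which realises $\mathfrak{End}_{\langle\lambda^\infty\rangle}(\mathscr{C}_{+}(a,b))$ as a submonoid of $(\mathbb{N},*)\rtimes_\varphi(\omega,+)$.

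The main obstacle is bookkeeping rather than conceptual: one must carefully track the case split in the formula for $\lambda_{n,s}$ and verify that the second-factor index $n_1 i - s_1$ never collapses to $0$ when $i\geqslant 1$, since this is what ensures the composition closes inside the family rather than spilling into a different branch. Once this inequality is clear, the embedding identifies the composition formula with the semidirect-product multiplication, and the range restriction $0\leqslant s\leqslant n-1$ cuts out the image submonoid.
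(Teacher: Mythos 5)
Your proof is correct and follows essentially the same route as the paper's: the same case-by-case computation yielding $\lambda_{n_1,s_1}\lambda_{n_2,s_2}=\lambda_{n_1n_2,\,n_2s_1+s_2}$, the same bound $n_2s_1+s_2\leqslant n_1n_2-1$ guaranteeing closure, and the same coordinate map $\lambda_{n,s}\mapsto(n,s)$ into the semidirect product (your observation that $n_1i-s_1\geqslant 1$ for $i\neq 0$ is in fact a slightly cleaner treatment than the paper's listing of a vacuous subcase). One small slip: evaluating at $ba$ alone only recovers $n-s$, since $(ba)\lambda_{n,s}=b^{n-s}a^{n-s}$, so to recover the pair $(n,s)$ you should also evaluate at $a$ to get $n$ from $(a)\lambda_{n,s}=a^{n}$ --- an immediate repair, and the paper does not address injectivity at all.
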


\begin{proof}
Fix arbitrary positive integers $n_1$, $n_2$, $s_1=0,\dots,n_1-1$ and $s_2=0,\dots,n_2-1$. Then for any $b^ia^j\in \mathscr{C}_{+}(a,b)$ we have that
\begin{align*}
  (&(b^ia^j)\lambda_{n_1,s_1})\lambda_{n_2,s_2}=
  \left\{
    \begin{array}{ll}
      (a^{n_1j})\lambda_{n_2,s_2},                 & \hbox{if~} i=0;\\
      (b^{n_1i-s_1}a^{n_1j-s_1})\lambda_{n_2,s_2}, & \hbox{if~} i\neq 0,
    \end{array}
  \right.=\\
   &=
\left\{
    \begin{array}{ll}
      a^{n_1n_2j},                                & \hbox{if~} i=0;\\
      a^{n_2(n_1j-s_1)},                          & \hbox{if~} i\neq 0 \hbox{~and~} n_1i-s_1=0;\\
      b^{n_2(n_1i-s_1)-s_2}a^{n_2(n_1j-s_1)-s_2}, & \hbox{if~} i\neq 0 \hbox{~and~} n_1i-s_1\neq0,
    \end{array}
  \right.=\\
   &=
\left\{
    \begin{array}{ll}
      a^{n_1n_2j},                                      & \hbox{if~} i=0;\\
      b^{n_2n_1i-(s_1n_2+s_2)}a^{n_2n_1j-(s_1n_2+s_2)}, & \hbox{if~} i\neq 0
    \end{array}
  \right.=\\
   &=(b^ia^j)\lambda_{n_1n_2,s_1n_2+s_2}.
\end{align*}
Since $s_1<n_1$ and $s_2 < n_2$, we have that
\begin{equation*}
  s_1n_2+s_2\leqslant (n_1-1)n_2+s_2<(n_1-1)n_2+n_2=n_1n_2-n_2+n_2=n_1n_2,
\end{equation*}
and hence $s_1n_2+s_2<n_1n_2$. This implies that $\mathfrak{End}_{\langle\lambda^\infty\rangle}(\mathscr{C}_{+}(a,b))$ is a subsemigroup of $\mathfrak{End}(\mathscr{C}_{+}(a,b))$.
Since $\lambda_{n,0}=\lambda_{n}$ for any positive integer $n$, $\mathfrak{End}_{\langle\lambda\rangle}(\mathscr{C}_{+}(a,b))$ is a submonoid of $\mathfrak{End}_{\langle\lambda^\infty\rangle}(\mathscr{C}_{+}(a,b))$.

We define the map $\boldsymbol{\Phi}\colon \mathfrak{End}_{\langle\lambda^\infty\rangle}(\mathscr{C}_{+}(a,b))\to  (\mathbb{N},*)\rtimes_\varphi(\omega, +)$ by the formula $(\lambda_{n,s})\boldsymbol{\Phi}=(n,s)$. The above arguments imply that
\begin{align*}
  (\lambda_{n_1,s_1}\lambda_{n_2,s_2})\boldsymbol{\Phi}&=(\lambda_{n_1n_2,s_1n_2+s_2})\boldsymbol{\Phi}= \\
  &=(n_1n_2,s_1n_2+s_2)=\\
  &=(n_1n_2,(s_1)\varphi_{n_2}+s_2)=\\
  &=(n_1,s_1)(n_2,s_2)=\\
  &=(\lambda_{n_1,s_1})\boldsymbol{\Phi}(\lambda_{n_2,s_2})\boldsymbol{\Phi},
\end{align*}
and hence $\boldsymbol{\Phi}$ is a homomorphism.
\end{proof}

\begin{example}\label{example-3.4}
We define the map $\varsigma\colon \mathscr{C}_{+}(a,b)\to\mathscr{C}_{+}(a,b)$ by the formula
\begin{equation*}
  (b^ia^j)\varsigma=
\left\{
  \begin{array}{ll}
    1, & \hbox{if~} i=j=0;\\
    b^{i+1}a^{j+1}, & \hbox{otherwise},
  \end{array}
\right.
\end{equation*}
for any $i,j\in\omega$.
\end{example}

\begin{lemma}\label{lemma-3.5}
The map $\varsigma\colon \mathscr{C}_{+}(a,b)\to\mathscr{C}_{+}(a,b)$ is an injective monoid endomorphism.
\end{lemma}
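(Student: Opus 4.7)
My plan is to verify, in this order, that $\varsigma$ preserves the monoid identity, that $\varsigma$ is injective, and that $\varsigma$ is a semigroup homomorphism. Preservation of the identity is immediate from the first branch of the definition: $(1)\varsigma = (b^0a^0)\varsigma = 1$. It also follows directly that $\varsigma$ takes values in $\mathscr{C}_{+}(a,b)$, since if $i\leqslant j$ then $i+1\leqslant j+1$.

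For injectivity I would argue that $1$ is the unique preimage of $1$: indeed any $b^ia^j\neq 1$ is sent to $b^{i+1}a^{j+1}$, whose first exponent is at least $1$, so it cannot equal $1$. On the complement $\mathscr{C}_{+}(a,b)\setminus\{1\}$ the prescription $b^ia^j\mapsto b^{i+1}a^{j+1}$ is visibly injective, which finishes injectivity.

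The bulk of the proof is to verify $(xy)\varsigma=(x)\varsigma\cdot(y)\varsigma$ for arbitrary $x,y\in\mathscr{C}_{+}(a,b)$. I would split cases according to whether $x$ or $y$ equals $1$: whenever at least one of $x,y$ is the identity, both sides coincide trivially, because $(1)\varsigma=1$. In the remaining case $x=b^ia^j$ and $y=b^ka^l$ with $(i,j),(k,l)\neq(0,0)$, my plan is to apply the bicyclic multiplication rule
$$b^ia^j\cdot b^ka^l = b^{i+k-\min\{j,k\}}a^{j+l-\min\{j,k\}}$$
together with the elementary identity $\min\{j+1,k+1\}=\min\{j,k\}+1$, which quickly reduces both $(xy)\varsigma$ and $(x)\varsigma\cdot(y)\varsigma$ to the same word $b^{i+k-\min\{j,k\}+1}a^{j+l-\min\{j,k\}+1}$.

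The one delicate point — and the main obstacle I anticipate — is to confirm that in this substantive case $xy$ itself is never equal to $1$, so that the second branch of the definition of $\varsigma$ really does apply to $xy$. To do this I would suppose $xy=1$; then $j+l-\min\{j,k\}=0$, and since $j\geqslant\min\{j,k\}$ and $l\geqslant 0$ this forces $l=0$ and $j=\min\{j,k\}$. The constraint $k\leqslant l=0$ then gives $k=0$, hence $j=0$; and finally $i+k-\min\{j,k\}=0$ forces $i=0$, contradicting $(i,j)\neq(0,0)$. This contradiction shows the case analysis is complete and $\varsigma$ is a monoid homomorphism.
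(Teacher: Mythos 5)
Your proof is correct and follows essentially the same route as the paper: reduce to the case where both factors are non-identity and compare both sides via the bicyclic multiplication rule (your single $\min$-identity computation is just a compressed form of the paper's three-way case split on $j<k$, $j=k$, $j>k$). The one point where you go beyond the paper is your explicit verification that a product of two non-identity elements of $\mathscr{C}_{+}(a,b)$ is again non-identity, so that the second branch of the definition of $\varsigma$ really applies to $xy$; the paper uses this fact silently, and your check closes that small gap.
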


\begin{proof}
A simple verification shows that $\varsigma$ is an injective map.
Obviously that it is sufficient to show that for any $b^ia^j\neq 1$ and $b^ka^l\neq 1$ the following equality
$(b^ia^j\cdot b^ka^l)\varsigma=(b^ia^j)\varsigma\cdot (b^ka^l)\varsigma$ holds. Indeed,
\begin{align*}
  (b^ia^j\cdot b^ka^l)\varsigma&=
\left\{
  \begin{array}{ll}
    (b^{i-j+k}a^l)\varsigma, & \hbox{if~} j<k;\\
    (b^ia^l)\varsigma,       & \hbox{if~} j=k;\\
    (b^ia^{j-k+l})\varsigma, & \hbox{if~} j>k
  \end{array}
\right.=
 \\
   &=
\left\{
  \begin{array}{ll}
    b^{i-j+k+1}a^{l+1}, & \hbox{if~} j<k;\\
    b^{i+1}a^{l+1},     & \hbox{if~} j=k;\\
    b^{i+1}a^{j-k+l+1}, & \hbox{if~} j>k
  \end{array}
\right.
\end{align*}
and
\begin{align*}
  (b^ia^j)\varsigma\cdot (b^ka^l)\varsigma &=b^{i+1}a^{j+1}\cdot b^{k+1}a^{l+1}= \\
\end{align*}
\begin{align*}
   &=
\left\{
  \begin{array}{ll}
    b^{i+1-(j+1)+k+1}a^{l+1}, & \hbox{if~} j+1<k+1;\\
    b^{i+1}a^{l+1},           & \hbox{if~} j+1=k+1; \\
    b^{i+1}a^{j+1-(k+1)+l+1}, & \hbox{if~} j+1>k+1
  \end{array}
\right.=
 \\
   &=
\left\{
  \begin{array}{ll}
    b^{i-j+k+1}a^{l+1}, & \hbox{if~} j<k;\\
    b^{i+1}a^{l+1},     & \hbox{if~} j=k;\\
    b^{i+1}a^{j-k+l+1}, & \hbox{if~} j>k,
  \end{array}
\right.
\end{align*}
and hence $\varsigma$ is an injective monoid endomorphism of $\mathscr{C}_{+}(a,b)$.
\end{proof}

By $\mathfrak{End}_{\langle\varsigma\rangle}(\mathscr{C}_{+}(a,b))$  we denote the subsemigroup of $\mathfrak{End}(\mathscr{C}_{+}(a,b))$, which is generated by endomorphism $\varsigma$ of the monoid $\mathscr{C}_{+}(a,b)$. Also by $\mathfrak{End}_{\langle\varsigma\rangle}^1(\mathscr{C}_{+}(a,b))$ we denote the semigroup $\mathfrak{End}_{\langle\varsigma\rangle}(\mathscr{C}_{+}(a,b))$ with the adjoined unit. Without loss of generality we may assume that
\begin{equation*}
\mathfrak{End}_{\langle\varsigma\rangle}^1(\mathscr{C}_{+}(a,b))=\mathfrak{End}_{\langle\varsigma\rangle}(\mathscr{C}_{+}(a,b))\cup\{\lambda_1\}.
\end{equation*}

\begin{proposition}\label{proposition-3.6}
The semigroup $\mathfrak{End}_{\langle\varsigma\rangle}(\mathscr{C}_{+}(a,b))$ is isomorphic to the additive semigroup of positive integers $(\mathbb{N},+)$, and hence $\mathfrak{End}_{\langle\varsigma\rangle}^1(\mathscr{C}_{+}(a,b))$ is isomorphic to the additive monoid of non-negative integers $(\omega,+)$.
\end{proposition}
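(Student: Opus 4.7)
The plan is to compute the iterates of $\varsigma$ explicitly and show that the map $n\mapsto\varsigma^n$ yields the desired isomorphism. First I would establish by induction on $n\in\mathbb{N}$ that
\begin{equation*}
(1)\varsigma^n=1
\qquad\hbox{and}\qquad
(b^ia^j)\varsigma^n=b^{i+n}a^{j+n} \quad \hbox{for } (i,j)\neq(0,0).
\end{equation*}
The base case $n=1$ is the definition of $\varsigma$ given in Example~\ref{example-3.4}. For the inductive step, note that whenever $(i,j)\neq(0,0)$ we also have $(i+n,j+n)\neq(0,0)$, so applying $\varsigma$ to $b^{i+n}a^{j+n}$ produces $b^{i+n+1}a^{j+n+1}$, closing the induction.

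Next I would derive the composition rule $\varsigma^n\circ\varsigma^m=\varsigma^{n+m}$ directly from the above formula: on $1$ both sides give $1$, and on $b^ia^j$ with $(i,j)\neq(0,0)$ the left side sends $b^ia^j\mapsto b^{i+n}a^{j+n}\mapsto b^{i+n+m}a^{j+n+m}$, matching $\varsigma^{n+m}$. This shows that $\mathfrak{End}_{\langle\varsigma\rangle}(\mathscr{C}_{+}(a,b))=\{\varsigma^n\colon n\in\mathbb{N}\}$ and that the assignment $\boldsymbol{\Psi}\colon(\mathbb{N},+)\to\mathfrak{End}_{\langle\varsigma\rangle}(\mathscr{C}_{+}(a,b))$, $n\mapsto\varsigma^n$, is a surjective semigroup homomorphism.

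Injectivity of $\boldsymbol{\Psi}$ is immediate from the explicit formula: if $n\neq m$ then evaluating on the element $ba\in\mathscr{C}_{+}(a,b)$ gives $(ba)\varsigma^n=b^{n+1}a^{n+1}\neq b^{m+1}a^{m+1}=(ba)\varsigma^m$, so $\varsigma^n\neq\varsigma^m$. This completes the proof of the first assertion.

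For the monoid assertion, I would use Theorem~\ref{theorem-2.7}\eqref{theorem-2.7-1}: $\lambda_1$ is the identity of $\mathfrak{End}(\mathscr{C}_{+}(a,b))$, so in $\mathfrak{End}_{\langle\varsigma\rangle}^1(\mathscr{C}_{+}(a,b))=\mathfrak{End}_{\langle\varsigma\rangle}(\mathscr{C}_{+}(a,b))\cup\{\lambda_1\}$ the adjoined unit coincides with $\lambda_1$ and satisfies $\lambda_1\circ\varsigma^n=\varsigma^n\circ\lambda_1=\varsigma^n$. Extending $\boldsymbol{\Psi}$ by $(0)\boldsymbol{\Psi}=\lambda_1$ then produces a bijective monoid homomorphism from $(\omega,+)$ onto $\mathfrak{End}_{\langle\varsigma\rangle}^1(\mathscr{C}_{+}(a,b))$. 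No step poses a real obstacle; the only care needed is in the inductive computation of $\varsigma^n$, where one must track the exceptional behavior at the identity $1$ to confirm that once $(i,j)\neq(0,0)$ the orbit of $b^ia^j$ under $\varsigma$ never returns to $1$.
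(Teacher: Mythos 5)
Your proposal is correct and follows essentially the same route as the paper: both compute the explicit formula $(b^ia^j)\varsigma^n=b^{i+n}a^{j+n}$ for $(i,j)\neq(0,0)$ (with $1\mapsto 1$) and deduce that $\varsigma$ generates an infinite cyclic subsemigroup, hence a copy of $(\mathbb{N},+)$, the monoid statement being immediate. Your version merely makes the induction and the injectivity check (via the element $ba$) more explicit than the paper does.
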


\begin{proof}
For any $b^ia^j\in \mathscr{C}_{+}(a,b)$ and any positive integer $n$ by the definition of $\varsigma$ we have that
\begin{align*}
  (b^ia^j)\varsigma^n&=((b^ia^j)\varsigma)\varsigma^{n-1}= \\
   &=
   \left\{
  \begin{array}{ll}
    (1)\varsigma^{n-1},              & \hbox{if~} i=j=0;\\
    (b^{i+1}a^{j+1})\varsigma^{n-1}, & \hbox{otherwise},
  \end{array}
\right.=\\
   &=\ldots =\\
   &=
   \left\{
  \begin{array}{ll}
    (1)\varsigma,                  & \hbox{if~} i=j=0;\\
    (b^{i+n-1}a^{j+n-1})\varsigma, & \hbox{otherwise},
  \end{array}
\right.=\\
   &=
   \left\{
  \begin{array}{ll}
    1, & \hbox{if~} i=j=0;\\
    b^{i+n}a^{j+n}, & \hbox{otherwise}.
  \end{array}
\right.
\end{align*}
The definition of the bicyclic monoid $\mathscr{C}(a,b)$ implies that $b^{k_1}a^{l_1}=b^{k_2}a^{l_2}$ in $\mathscr{C}_{+}(a,b)$ if an only if $k_1=k_2$ and $l_1=l_2$. This and above equalities imply that the endomorphism $\varsigma$ generates the infinite cyclic subsemigroup in $\mathfrak{End}(\mathscr{C}_{+}(a,b))$, and hence $\mathfrak{End}_{\langle\varsigma\rangle}(\mathscr{C}_{+}(a,b))$ is isomorphic to the additive semigroup of positive integers $(\mathbb{N},+)$. The last statement of the proposition is obvious.
\end{proof}

\begin{lemma}\label{lemma-3.7}
Let $\varepsilon\colon \mathscr{C}_{+}(a,b)\to\mathscr{C}_{+}(a,b)$  be an injective monoid endomorphism such that $(a)\varepsilon=a^n$ for some positive integer $n$. Then there exists $s\in\{0,\ldots,n-1\}$ such that $\varepsilon=\lambda_{n,s}$.
\end{lemma}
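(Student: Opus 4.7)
The plan is to exploit the fact that $\mathscr{C}_{+}(a,b)$ is generated as a monoid by $a$ together with the idempotents $\{b^i a^i : i \geq 1\}$, via the factorisation $b^i a^j = b^i a^i \cdot a^{j-i}$ (valid whenever $i \leq j$). Since $(a^k)\varepsilon = a^{nk}$ is already determined by $(a)\varepsilon = a^n$ and the monoid condition, the whole of $\varepsilon$ is controlled once its value on every $b^i a^i$ is pinned down.

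First I would compute $(ba)\varepsilon$. As $ba$ is an idempotent of $\mathscr{C}_{+}(a,b)$, its image has the form $b^m a^m$ for some $m \in \omega$. Applying $\varepsilon$ to the bicyclic identity $a \cdot ba = a$ yields $a^n \cdot b^m a^m = a^n$, and the bicyclic multiplication formula forces $m \leq n$; injectivity of $\varepsilon$ excludes $m = 0$ (otherwise $(ba)\varepsilon = (1)\varepsilon$). Hence $m \in \{1, \ldots, n\}$, and setting $s := n - m$ delivers the required $s \in \{0, \ldots, n-1\}$ with $(ba)\varepsilon = b^{n-s} a^{n-s} = (ba)\lambda_{n,s}$.

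The central step is an induction on $i \geq 1$ showing that $(b^i a^i)\varepsilon = b^{ni - s} a^{ni - s}$. For the inductive step I would apply $\varepsilon$ to the bicyclic identity $a \cdot b^{i+1} a^{i+1} = b^i a^{i+1}$. The right-hand side becomes $(b^i a^i)\varepsilon \cdot (a)\varepsilon = b^{ni-s} a^{(i+1)n-s}$ by the induction hypothesis, while the left-hand side becomes $a^n \cdot b^p a^p$, where $(b^{i+1} a^{i+1})\varepsilon = b^p a^p$ is the unknown idempotent image. The bicyclic formula for $a^n \cdot b^p a^p$ splits into the two cases $p \leq n$ and $p > n$; the first case would force a zero $b$-exponent on the right, which is impossible since $ni - s \geq n - s \geq 1$, so only $p = (i+1)n - s$ survives, completing the induction.

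Finally, for any $b^i a^j \in \mathscr{C}_{+}(a,b)$ with $i \geq 1$ and $j \geq i$ the factorisation $b^i a^j = b^i a^i \cdot a^{j-i}$ yields $(b^i a^j)\varepsilon = b^{ni-s} a^{nj-s}$, matching $\lambda_{n,s}$; and for $i = 0$ we already have $(a^j)\varepsilon = a^{nj} = (a^j)\lambda_{n,s}$. The delicate point on which the whole argument pivots is the strict inequality $ni - s \geq 1$ used to exclude the degenerate case in the induction, and this is precisely where injectivity (via the upper bound $s \leq n-1$) is indispensable; without it the map would collapse on idempotents and cease to be one-to-one.
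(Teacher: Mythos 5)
Your proof is correct and follows essentially the same route as the paper's: both arguments determine the images of the idempotents $b^ia^i$ by applying $\varepsilon$ to the identities $a\cdot ba=a$ and $a\cdot b^{i+1}a^{i+1}=b^ia^i\cdot a$, using injectivity to exclude the degenerate case of the bicyclic product. The only difference is organizational — you pin down $s$ from $(ba)\varepsilon$ first and then induct, while the paper derives the recurrence $s_{i+1}=s_i+n$ first and fixes $s_1$ at the end.
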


\begin{proof}
We observe that if $\varepsilon$ is an injective endomorphism of $\mathscr{C}_{+}(a,b)$ then for any idempotents $b^ia^i,b^ja^j\in \mathscr{C}_{+}(a,b)$ the inequality $b^ia^i\preccurlyeq b^ja^j$ implies that $(b^ia^i)\varepsilon\preccurlyeq (b^ja^j)\varepsilon$, because the equality $b^ia^i\cdot b^ja^j=b^ia^i$ implies that
\begin{equation*}
(b^ia^i)\varepsilon\cdot(b^ja^j)\varepsilon=(b^ia^i \cdot b^ja^j)\varepsilon=(b^ia^i)\varepsilon.
\end{equation*}
Since $\varepsilon$ is an injective monoid  endomorphism of $\mathscr{C}_{+}(a,b)$, we conclude that $(b^ia^i)\varepsilon\neq(b^ja^j)\varepsilon$ and $(b^0a^0)\varepsilon=(1)\varepsilon=1=b^0a^0$. Hence  there exists a strictly increasing sequence $\{s_i\}_{i\in\omega}$ in $\omega$ such that  $(b^ia^i)\varepsilon=b^{s_i}a^{s_i}$ for any $i\in\omega$ and $s_0=0$. Then for any positive integer $i$ we have that
\begin{align*}
  (b^ia^{i+1})\varepsilon&=(b^ia^i\cdot a)\varepsilon=\\
	&=(b^ia^i)\varepsilon\cdot(a)\varepsilon=\\
    &=b^{s_i}a^{s_i}\cdot a^n=\\
	&=b^{s_i}a^{s_i+n}
\end{align*}
and
\begin{align*}
(b^ia^{i+1})\varepsilon&=(a\cdot b^{i+1}a^{i+1})\varepsilon=\\
	&=(a)\varepsilon\cdot (b^{i+1}a^{i+1})\varepsilon=\\
	&=a^n\cdot b^{s_{i+1}}a^{s_{i+1}}=\\
	&=
	\left\{
	\begin{array}{ll}
		a^{n}, & \hbox{if~} n\geqslant s_{i+1};\\
		b^{s_{i+1}-n}a^{s_{i+1}}, & \hbox{if~} n<s_{i+1}.
	\end{array}
	\right.
\end{align*}
The injectivity of $\varepsilon$ implies that $n<s_{i+1}$.
Since $s_0=0$ and $s_i< s_{i+1}$ for any $i\in\omega$, the above equalities imply that $s_{i+1}=s_i+n$ for any positive integer $i$, and hence $s_{i+1}-s_i=n$ for $i\in\mathbb{N}$. By induction we have that $s_{i+1}=s_1+in$ for all $i\in\mathbb{N}$. This implies that
\begin{equation*}
(b^ia^i)\varepsilon=b^{(i-1)n+s_1}a^{(i-1)n+s_1}
\end{equation*}
for any positive integer $i$. Then we get that
\begin{align*}
  (b^ia^{i+l})\varepsilon&=(b^ia^i\cdot a^l)\varepsilon=\\
   &=(b^ia^i)\varepsilon\cdot (a^l)\varepsilon= \\
   &=b^{(i-1)n+s_1}a^{(i-1)n+s_1}\cdot ((a)\varepsilon)^l= \\
   &=b^{(i-1)n+s_1}a^{(i-1)n+s_1}\cdot (a^n)^l= \\
   &=b^{(i-1)n+s_1}a^{(i-1)n+s_1}\cdot a^{nl}= \\
   &=b^{(i-1)n+s_1}a^{(i+l-1)n+s_1},
\end{align*}
for any $i,l\in\omega$. Also, since
\begin{align*}
  a^n &=(a)\varepsilon= \\
   &=(a\cdot ba)\varepsilon= \\
   &=(a)\varepsilon\cdot (ba)\varepsilon= \\
   &=a^n\cdot b^{s_1}a^{s_1},
\end{align*}
properties of semigroup operation of $\mathscr{C}_{+}(a,b)$ and the natural partial order on the set of idempotents of the monoid $\mathscr{C}_{+}(a,b)$ imply that $s_1\leqslant n$, and by injectivity of $\varepsilon$ we have that $s_1>0$.   Put $s=n-s_1$. Then we obtain that $s\in\{0,\ldots,n-1\}$ and
\begin{align*}
  (b^ia^{i+l})\varepsilon&=b^{(i-1)n+s_1}a^{(i+l-1)n+s_1}= \\
  &=b^{in-n+s_1}a^{(i+l)n-n+s_1}=\\
  &=b^{in-s}a^{(i+l)n-s}.
\end{align*}
This and Proposition \ref{proposition-3.2} imply that $\varepsilon=\lambda_{n,s}$.
\end{proof}

\begin{proposition}\label{proposition-3.8}
Let $S$, $T$, and $U$ be  semigroups, let $\mathfrak{h}\colon S\to T$ be a homomorphism, and let $\mathfrak{g}\colon U\to T$ be an injective homomorphism. If $(S)\mathfrak{h}\subseteq (U)\mathfrak{g}$, then the mapping $\mathfrak{f}\colon S\to U$ which is defined by the formula $(s)\mathfrak{f}=((s)\mathfrak{h})\mathfrak{g}^{-1}$ is a homomorphism. Moreover, if the homomorphism $\mathfrak{h}\colon S\to T$ is injective or a monoid homomorphism, then so is $\mathfrak{f}$, too.
\end{proposition}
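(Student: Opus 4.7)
The plan is a straightforward diagram chase, organized in four short steps and relying throughout on the injectivity of $\mathfrak{g}$ to invert it on its image.

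First I would verify that $\mathfrak{f}$ is well-defined. The hypothesis $(S)\mathfrak{h}\subseteq (U)\mathfrak{g}$ guarantees that for every $s\in S$ there is some $u\in U$ with $(u)\mathfrak{g}=(s)\mathfrak{h}$, and the injectivity of $\mathfrak{g}$ makes this $u$ unique. Hence $((s)\mathfrak{h})\mathfrak{g}^{-1}$ is an unambiguously defined element of $U$, which I will call $(s)\mathfrak{f}$. By construction this gives the key identity $((s)\mathfrak{f})\mathfrak{g}=(s)\mathfrak{h}$ for every $s\in S$, which drives everything that follows.

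Next, to show $\mathfrak{f}$ is a semigroup homomorphism, I compute, for $s_1,s_2\in S$,
\begin{equation*}
  ((s_1s_2)\mathfrak{f})\mathfrak{g}=(s_1s_2)\mathfrak{h}=((s_1)\mathfrak{h})((s_2)\mathfrak{h})=(((s_1)\mathfrak{f})\mathfrak{g})(((s_2)\mathfrak{f})\mathfrak{g})=((s_1)\mathfrak{f}\cdot (s_2)\mathfrak{f})\mathfrak{g},
\end{equation*}
using that $\mathfrak{h}$ and $\mathfrak{g}$ are homomorphisms and the identity of the previous paragraph. Cancelling $\mathfrak{g}$ by injectivity yields $(s_1s_2)\mathfrak{f}=(s_1)\mathfrak{f}\cdot(s_2)\mathfrak{f}$.

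The two ``moreover'' clauses are then both immediate. If $\mathfrak{h}$ is injective and $(s_1)\mathfrak{f}=(s_2)\mathfrak{f}$, then applying $\mathfrak{g}$ gives $(s_1)\mathfrak{h}=(s_2)\mathfrak{h}$, and injectivity of $\mathfrak{h}$ forces $s_1=s_2$. If $\mathfrak{h}$ is a monoid homomorphism (so that $S,T$ are monoids, $\mathfrak{g}$ is a monoid homomorphism into $T$, and $(1_S)\mathfrak{h}=1_T=(1_U)\mathfrak{g}$), then $((1_S)\mathfrak{f})\mathfrak{g}=(1_S)\mathfrak{h}=1_T=(1_U)\mathfrak{g}$, and injectivity of $\mathfrak{g}$ gives $(1_S)\mathfrak{f}=1_U$.

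There is no real obstacle in this proof; the only thing to be careful about is remembering that every nontrivial step is a cancellation of $\mathfrak{g}$ on the right, licensed by its injectivity, and that the defining identity $((s)\mathfrak{f})\mathfrak{g}=(s)\mathfrak{h}$ is all one ever actually needs about $\mathfrak{f}$.
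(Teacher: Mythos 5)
Your proof is correct and follows essentially the same route as the paper's: well-definedness from injectivity of $\mathfrak{g}$ together with $(S)\mathfrak{h}\subseteq(U)\mathfrak{g}$, then the multiplicativity of $\mathfrak{f}$ by cancelling $\mathfrak{g}$ (the paper phrases this as pushing $\mathfrak{g}^{-1}$ through the product, which is the same cancellation), with the two ``moreover'' clauses handled as immediate consequences. Your write-up is in fact slightly more careful than the paper's, since you spell out the monoid clause (including the implicit assumption that $\mathfrak{g}$ preserves identities) which the paper dismisses as obvious.
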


\begin{proof}
Since $\mathfrak{g}\colon U\to T$ is an injective homomorphism, the map $\mathfrak{f}\colon S\to U$ is well defined. Also, for arbitrary $s_1,s_2\in S$ we have that
\begin{align*}
  (s_1\cdot s_2)\mathfrak{f}& =((s_1\cdot s_2)\mathfrak{h})\mathfrak{g}^{-1}= \\
   &=((s_1)\mathfrak{h}\cdot(s_2)\mathfrak{h})\mathfrak{g}^{-1}= \\
   &=((s_1)\mathfrak{h})\mathfrak{g}^{-1}\cdot((s_2)\mathfrak{h})\mathfrak{g}^{-1}=\\
   &=(s_1)\mathfrak{f}\cdot (s_2)\mathfrak{f},
\end{align*}
because $\mathfrak{g}\colon U\to T$ is an injective homomorphism. Hence $\mathfrak{f}\colon S\to U$ is s homomorphism. The second statement of the proposition is obvious.
\end{proof}

\begin{lemma}\label{lemma-3.9}
Let $\varepsilon\colon \mathscr{C}_{+}(a,b)\to\mathscr{C}_{+}(a,b)$  be an injective monoid endomorphism such that $(a)\varepsilon=b^na^{n+p}$ for some positive integers $n$ and $p$. Then there exists $s\in\{0,\ldots,n-1\}$ such that $\varepsilon=\lambda_{p,s}\varsigma^n$.
\end{lemma}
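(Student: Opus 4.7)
The plan is to follow the template of the proof of Lemma~\ref{lemma-3.7}, modified to absorb the $b$-part of $(a)\varepsilon=b^na^{n+p}$ into a factor of $\varsigma^n$. First, because $\varepsilon$ is an injective monoid endomorphism, it sends the idempotent chain $\{b^ia^i\}_{i\in\omega}$ into itself in a strictly order-preserving way, so there is a strictly increasing sequence $\{s_i\}_{i\in\omega}\subseteq\omega$ with $s_0=0$ and $(b^ia^i)\varepsilon=b^{s_i}a^{s_i}$ for every $i\in\omega$. The goal is to determine $\{s_i\}$ explicitly and then match the resulting values of $\varepsilon$ against those of $\lambda_{p,s}\varsigma^n$.

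The key step is to pin down $s_1$ and the recursion for $s_{i+1}$. Applying $\varepsilon$ to the identity $a\cdot ba=a$ gives $b^na^{n+p}=b^na^{n+p}\cdot b^{s_1}a^{s_1}$, which forces $s_1\leqslant n+p$. On the other hand, computing $(ba^2)\varepsilon=(ba)\varepsilon\cdot(a)\varepsilon=b^{s_1}a^{s_1}\cdot b^na^{n+p}$ and using injectivity to ensure the result is distinct from $(a)\varepsilon=b^na^{n+p}$ forces $s_1>n$. Hence $s_1\in\{n+1,\ldots,n+p\}$, and I set $s:=n+p-s_1\in\{0,\ldots,p-1\}$. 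For $i\geqslant1$, evaluating $(b^ia^{i+1})\varepsilon$ both as $(b^ia^i\cdot a)\varepsilon=b^{s_i}a^{s_i+p}$ (using $s_i\geqslant s_1>n$) and as $(a\cdot b^{i+1}a^{i+1})\varepsilon$, with injectivity again ruling out $s_{i+1}\leqslant n+p$, yields the recursion $s_{i+1}=s_i+p$; by induction $s_i=pi-s+n$ for all $i\geqslant1$.

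With the sequence $\{s_i\}$ explicit, I would extend to all of $\mathscr{C}_{+}(a,b)$ via the factorization $b^ia^j=(b^ia^i)\cdot a^{j-i}$ together with the computation $(a^l)\varepsilon=((a)\varepsilon)^l=(b^na^{n+p})^l=b^na^{n+lp}$ for $l\geqslant1$. A routine bicyclic multiplication then gives $(a^j)\varepsilon=b^na^{n+pj}$ for $j\geqslant1$ and $(b^ia^j)\varepsilon=b^{pi-s+n}a^{pj-s+n}$ for $i\geqslant1$, $j\geqslant i$. Comparing these formulas with the values of $\lambda_{p,s}\varsigma^n$ obtained directly from Example~\ref{example-3.1} and the power formula for $\varsigma^n$ derived inside the proof of Proposition~\ref{proposition-3.6} confirms $\varepsilon=\lambda_{p,s}\varsigma^n$.

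I expect the main obstacle to be the case analysis in the second paragraph: one must exploit injectivity from both sides, ruling out $s_1\leqslant n$ and $s_{i+1}\leqslant n+p$ (which would collapse distinct inputs) as well as $s_1>n+p$ (which would violate the identity $a\cdot ba=a$), in order to trap $s_1$ in $\{n+1,\ldots,n+p\}$ and drive the recursion. Once this interval is established, the rest of the argument is a mechanical bicyclic computation.
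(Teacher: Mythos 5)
Your proof is correct, but it takes a genuinely different route from the paper. The paper's proof is a factorization argument: it computes just enough ($(b^ia^{i+j})\varepsilon=b^{s_i}a^{s_i}\cdot(b^na^{n+p})^j$) to conclude that the image of $\varepsilon$ lies inside the image of $\varsigma^n$, then invokes Proposition~\ref{proposition-3.8} to produce the injective monoid endomorphism $\mathfrak{f}=\varepsilon(\varsigma^n)^{-1}$ with $(a)\mathfrak{f}=a^p$, and finally applies Lemma~\ref{lemma-3.7} to get $\mathfrak{f}=\lambda_{p,s}$, so that $\varepsilon=\lambda_{p,s}\varsigma^n$. You instead redo the Lemma~\ref{lemma-3.7} analysis from scratch in the new setting: trapping $s_1$ in $\{n+1,\ldots,n+p\}$ via the identity $a\cdot ba=a$ on one side and injectivity (ruling out $(ba^2)\varepsilon=(a)\varepsilon$) on the other, deriving the recursion $s_{i+1}=s_i+p$, and then matching the resulting explicit formulas against $\lambda_{p,s}\varsigma^n$ term by term. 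Your computations check out (I verified the interval for $s_1$, the recursion, and the final formulas $(a^j)\varepsilon=b^na^{n+pj}$ and $(b^ia^j)\varepsilon=b^{pi-s+n}a^{pj-s+n}$ against the composite $\lambda_{p,s}\varsigma^n$). What the paper's route buys is economy — it reuses Lemma~\ref{lemma-3.7} rather than repeating its argument; what your route buys is self-containedness, and it makes explicit the fact that $s_i>n$ for $i\geqslant 1$, which the paper needs (to place the idempotents $b^{s_i}a^{s_i}$ in the image of $\varsigma^n$) but leaves implicit. Note also that your derivation places $s$ in $\{0,\ldots,p-1\}$, which is the correct range for $\lambda_{p,s}$ to be defined and agrees with Theorem~\ref{theorem-3.10}; the range $\{0,\ldots,n-1\}$ in the statement of the lemma (and at the corresponding point of the paper's proof) is a typo.
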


\begin{proof}
Since $\varepsilon$ is an injective monoid  endomorphism of $\mathscr{C}_{+}(a,b)$, arguments presented in the proof of Lemma~\ref{lemma-3.7} imply that there exists a strictly increasing sequence $\{s_i\}_{i\in\omega}$ in $\omega$ such that  $(b^ia^i)\varepsilon=b^{s_i}a^{s_i}$ for any $i\in\omega$ and $s_0=0$. Then for any positive integers $i$ and $j$ we have that
\begin{align*}
  (b^{i}a^{i+j})\varepsilon& =(b^{i}a^{i}\cdot a^j)\varepsilon= \\
   &=(b^{i}a^{i})\varepsilon\cdot (a^j)\varepsilon= \\
   &=b^{s_i}a^{s_i}\cdot (b^na^{n+p})^j= \\
   &=b^{s_i}a^{s_i}\cdot b^na^{n+pj}= \\
   &=
	\left\{
	\begin{array}{ll}
		b^na^{n+pj}, & \hbox{if~} n\geqslant s_{i};\\
		b^{s_{i}}a^{s_{i}+pj}, & \hbox{if~} n<s_{i}.
	\end{array}
	\right.
\end{align*}
This implies that $(\mathscr{C}_{+}(a,b))\varepsilon\subseteq (\mathscr{C}_{+}(a,b))\varsigma^n$. By Proposition \ref{proposition-3.8} the mapping $\mathfrak{f}\colon \mathscr{C}_{+}(a,b)\to \mathscr{C}_{+}(a,b)$ which is defined by the formula $(b^ia^j)\mathfrak{f}=((b^ia^j)\varepsilon)(\varsigma^n)^{-1}$ is an endomorphism of the monoid $\mathscr{C}_{+}(a,b)$. Simple verifications show that $(a)\mathfrak{f}=a^p$. By Lemma~\ref{lemma-3.7} we have that $\mathfrak{f}=\lambda_{p,s}$ for some $s\in\{0,\ldots,n-1\}$, i.e., $\lambda_{p,s}=\varepsilon(\varsigma^n)^{-1}$. Since $\varsigma$ is an injective monoid endomorphism of $\mathscr{C}_{+}(a,b)$, we conclude that that $\varepsilon=\lambda_{p,s}\varsigma^n$.
\end{proof}

Theorem~\ref{theorem-3.10} describes all injective endomorphisms of the semigroup $\mathscr{C}_{+}(a,b)$ and it follows from Lemmas~\ref{lemma-3.7} and \ref{lemma-3.9}.

\begin{theorem}\label{theorem-3.10}
Let $\varepsilon\colon \mathscr{C}_{+}(a,b)\to\mathscr{C}_{+}(a,b)$  be an injective monoid endomorphism. Then only  one of the following statements holds:
\begin{enumerate}
  \item[(1)] there exist a positive integer $p$ and $s\in\{0,\ldots,p-1\}$ such that $\varepsilon=\lambda_{p,s}$;
  \item[(2)] there exist  positive integers $n$, $p$, and $s\in\{0,\ldots,p-1\}$ such that $\varepsilon=\lambda_{p,s}\varsigma^n$.
\end{enumerate}
\end{theorem}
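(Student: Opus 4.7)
The plan is to reduce Theorem~\ref{theorem-3.10} to the classification provided by Lemmas~\ref{lemma-3.7} and \ref{lemma-3.9} by inspecting the single element $(a)\varepsilon \in \mathscr{C}_{+}(a,b)$. Since every element of $\mathscr{C}_{+}(a,b)$ has the form $b^ia^j$ with $i\leqslant j$, I can write $(a)\varepsilon=b^ia^j$ for some $i,j\in\omega$ with $i\leqslant j$, and the task reduces to determining which triples $(i,j)$ are possible.

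First I would rule out $i=j$. The idempotents of the bicyclic monoid are exactly the elements $b^ka^k$, so if $(a)\varepsilon=b^ia^i$, then
\begin{equation*}
  (a^2)\varepsilon=((a)\varepsilon)^2=(b^ia^i)^2=b^ia^i=(a)\varepsilon,
\end{equation*}
contradicting injectivity since $a\neq a^2$ in $\mathscr{C}_{+}(a,b)$. In particular the case $i=j=0$ (which would give $(a)\varepsilon=1=(1)\varepsilon$) is also excluded. Hence $i<j$.

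Next I would split into the two remaining subcases. If $i=0$ then $(a)\varepsilon=a^j$ with $j\geqslant 1$, and Lemma~\ref{lemma-3.7} (applied with $n=j$) produces $s\in\{0,\ldots,j-1\}$ with $\varepsilon=\lambda_{j,s}$, which is statement~(1) of the theorem. If $i\geqslant 1$, I set $n=i$ and $p=j-i\geqslant 1$, so that $(a)\varepsilon=b^na^{n+p}$; Lemma~\ref{lemma-3.9} then supplies $s\in\{0,\ldots,p-1\}$ with $\varepsilon=\lambda_{p,s}\varsigma^n$, giving statement~(2).

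Finally, to justify the word \emph{only} in the theorem, I would observe that the two cases are mutually exclusive: in~(1) the image $(a)\varepsilon=\lambda_{p,s}(a)=a^p$ has zero $b$-exponent, whereas in~(2) the image $(a)\varepsilon=b^na^{n+p}$ has $b$-exponent $n\geqslant 1$. Since Lemmas~\ref{lemma-3.7} and \ref{lemma-3.9} are already in hand, the only delicate point here is the idempotent-exclusion argument, and that is an immediate consequence of injectivity; so I do not expect any real obstacle beyond carefully book-keeping the ranges of $n$, $p$, and $s$.
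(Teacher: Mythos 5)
Your proposal is correct and follows essentially the same route as the paper, which simply derives the theorem from Lemmas~\ref{lemma-3.7} and \ref{lemma-3.9}; you merely make explicit the case analysis on $(a)\varepsilon=b^ia^j$ (ruling out $i=j$ via idempotency and injectivity, then invoking Lemma~\ref{lemma-3.7} when $i=0$ and Lemma~\ref{lemma-3.9} when $i\geqslant 1$) together with the mutual exclusivity of the two cases. Note that you correctly read the range of $s$ in Lemma~\ref{lemma-3.9} as $\{0,\ldots,p-1\}$ (consistent with its proof and with the theorem), even though the lemma's statement in the paper misprints it as $\{0,\ldots,n-1\}$.
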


It is natural to ask the following: what is a semigroup operation on the subsemigroup $\mathfrak{End}_{\langle\overline{\lambda\varsigma}\rangle}(\mathscr{C}_{+}(a,b))$ of $\mathfrak{End}(\mathscr{C}_{+}(a,b))$ of endomorphisms of $\mathscr{C}_{+}(a,b)$ which is generated by endomorphism of the form $\lambda_{p,s}\varsigma^{n}$, where $p,n\in\mathbb{N}$ and $s\in\{0,\ldots,n-1\}$.
Theorem \ref{theorem-3.11} describes the structure of the semigroup operation on the semigroup $\mathfrak{End}_{\langle\overline{\lambda\varsigma}\rangle}(\mathscr{C}_{+}(a,b))$ of injective monoid endomorphisms of $\mathscr{C}_{+}(a,b)$.

\begin{theorem}\label{theorem-3.11}
$\mathfrak{End}_{\langle\overline{\lambda\varsigma}\rangle}(\mathscr{C}_{+}(a,b))$ is a subsemigroup of $\mathfrak{End}(\mathscr{C}_{+}(a,b))$  and it is isomorphic to the  subsemigroup of the Cartesian power $\mathbb{N}^3$ with the following semigroup operation
\begin{equation*}
  (p_1,s_1,n_1)\cdot(p_2,s_2,n_2)=(p_2p_1,p_2s_1,p_2n_1-s_2+n_2).
\end{equation*}
\end{theorem}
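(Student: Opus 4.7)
The plan is to compute the composition of two generators $\lambda_{p_1,s_1}\varsigma^{n_1}$ and $\lambda_{p_2,s_2}\varsigma^{n_2}$ explicitly, verify it equals $\lambda_{p_1p_2,\,p_2 s_1}\varsigma^{p_2 n_1 - s_2 + n_2}$, and conclude that the generating set is already closed under composition. Defining $\boldsymbol{\Psi}\colon \lambda_{p,s}\varsigma^n \mapsto (p,s,n)$ will then yield the claimed isomorphism, the image being the subsemigroup of $\mathbb{N}^3$ with the stated operation.

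First, I would apply the formula from Example~\ref{example-3.1} together with the explicit formula for $\varsigma^n$ derived in Proposition~\ref{proposition-3.6} to compute $(b^i a^j)(\lambda_{p,s}\varsigma^n)$ on a general element. The output splits into three cases: $1$ when $(i,j)=(0,0)$; $b^n a^{pj+n}$ when $i=0$ and $j>0$; and $b^{pi-s+n} a^{pj-s+n}$ when $i>0$ (the last valid because $pi-s>0$ when $s\leqslant p-1$ and $i\geqslant 1$). Applying a second endomorphism $\lambda_{p_2,s_2}\varsigma^{n_2}$ to each branch, one observes that outside the trivial case the intermediate element always has strictly positive $b$-exponent, so only the second clause of $\lambda_{p_2,s_2}$ is ever triggered. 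Matching the exponents of the resulting outputs with the template $\lambda_{P,S}\varsigma^N$ (namely $b^{Pi-S+N}a^{Pj-S+N}$ for $i>0$, and $b^N a^{Pj+N}$ for $i=0$, $j>0$) forces $P=p_1 p_2$, $S=p_2 s_1$, $N=p_2 n_1 - s_2 + n_2$. Admissibility is immediate: $p_2 s_1 \leqslant p_2(p_1-1) < p_1 p_2$, and $p_2 n_1 - s_2 + n_2 \geqslant p_2 - (p_2-1) + n_2 \geqslant 2$.

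With closure established, the family $\{\lambda_{p,s}\varsigma^n : p,n\in\mathbb{N},\; 0\leqslant s < p\}$ coincides with $\mathfrak{End}_{\langle\overline{\lambda\varsigma}\rangle}(\mathscr{C}_{+}(a,b))$. Injectivity of $\boldsymbol{\Psi}$ follows by evaluating at $a$ to recover $(p,n)$ from $(a)(\lambda_{p,s}\varsigma^n) = b^n a^{p+n}$, and at $ba$ to recover $s$ from $(ba)(\lambda_{p,s}\varsigma^n) = b^{p-s+n} a^{p-s+n}$; surjectivity onto the designated subset of $\mathbb{N}^3$ holds by construction; and the homomorphism property is exactly the composition formula derived above. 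I expect the main obstacle to be the careful bookkeeping of the piecewise definitions of $\lambda_{p,s}$ and $\varsigma^n$, since each composition requires handling $i=0$ versus $i>0$ separately in both applications, but no conceptual surprises arise.
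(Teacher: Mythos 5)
Your proposal is correct and follows essentially the same route as the paper: a direct case-by-case computation of $(b^ia^j)\lambda_{p_1,s_1}\varsigma^{n_1}\lambda_{p_2,s_2}\varsigma^{n_2}$ splitting on $i=j=0$, $i=0<j$, and $i>0$, yielding the closed form $\lambda_{p_1p_2,\,p_2s_1}\varsigma^{p_2n_1-s_2+n_2}$ and hence closure of the generating family. Your added checks (admissibility $p_2s_1<p_1p_2$ and $p_2n_1-s_2+n_2\geqslant 1$, and injectivity of the parametrization via evaluation at $a$ and $ba$) are details the paper leaves implicit, and you correctly use the constraint $s\in\{0,\ldots,p-1\}$ where the paper's wording has an apparent typo.
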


\begin{proof}
Fix arbitrary $p_1,n_1,p_2,n_2\in\mathbb{N}$, $s_1\in\{0,\ldots,n_1-1\}$ and $s_2\in\{0,\ldots,n_2-1\}$. Then for any  $(b^ia^j\in\mathscr{C}_{+}(a,b)$ we have that
\begin{align*}
  (&(((b^ia^j)\lambda_{p_1,s_1})\varsigma^{n_1})\lambda_{p_2,s_2})\varsigma^{n_2}=\\
  &=
  \left\{
    \begin{array}{ll}
      (((a^{p_1j})\varsigma^{n_1})\lambda_{p_2,s_2})\varsigma^{n_2},                 & \hbox{if~} i=0;\\
      (((b^{p_1i-s_1}a^{p_1j-s_1})\varsigma^{n_1})\lambda_{p_2,s_2})\varsigma^{n_2}, & \hbox{if~} i\neq 0
    \end{array}
  \right.=\\
   &=
   \left\{
    \begin{array}{ll}
      ((1)\lambda_{p_2,s_2})\varsigma^{n_2},                                & \hbox{if~} i=0 \hbox{~and~} j=0;\\
      ((b^{n_1}a^{p_1j+n_1})\lambda_{p_2,s_2})\varsigma^{n_2},              & \hbox{if~} i=0 \hbox{~and~} j\neq0;\\
      ((b^{p_1i-s_1+n_1}a^{p_1j-s_1+n_1})\lambda_{p_2,s_2})\varsigma^{n_2}, & \hbox{if~} i\neq 0
    \end{array}
  \right.=\\
   &=
  \left\{
    \begin{array}{ll}
      (1)\varsigma^{n_2},                                                  & \hbox{if~} i=0 \hbox{~and~} j=0;\\
      (b^{p_2n_1-s_2}a^{p_2(p_1j+n_1)-s_2})\varsigma^{n_2},                & \hbox{if~} i=0 \hbox{~and~} j\neq0;\\
      (b^{p_2(p_1i-s_1+n_1)-s_2}a^{p_2(p_1j-s_1+n_1)-s_2})\varsigma^{n_2}, & \hbox{if~} i\neq 0
    \end{array}
  \right.=\\
   &=
   \left\{
    \begin{array}{ll}
      1,                                                           & \hbox{if~} i=0 \hbox{~and~} j=0;\\
      b^{p_2n_1-s_2+n_2}a^{p_2(p_1j+n_1)-s_2+n_2},                 & \hbox{if~} i=0 \hbox{~and~} j\neq0;\\
      b^{p_2(p_1i-s_1+n_1)-s_2+n_2}a^{p_2(p_1j-s_1+n_1)-s_2+n_2},  & \hbox{if~} i\neq 0
    \end{array}
  \right.=\\
   &=\!
  \left\{\!\!
    \begin{array}{ll}
      1,                                                           & \hbox{if~} i=0 \hbox{~and~} j=0;\\
      b^{p_2n_1-s_2+n_2}a^{p_2p_1j+p_2n_1-s_2+n_2},                 & \hbox{if~} i=0 \hbox{~and~} j\neq0;\\
      b^{p_2p_1i-p_2s_1+p_2n_1-s_2+n_2}a^{p_2p_1j-p_2s_1+p_2n_1-s_2+n_2},  & \hbox{if~} i\neq 0
    \end{array}
  \right.\!\!\!\! \\
   &=(b^ia^j)\lambda_{p_2p_1,p_2s_1}\varsigma^{p_2n_1-s_2+n_2}
\end{align*}
The above equalities imply the statement of the theorem.
\end{proof}

\begin{remark}
\begin{enumerate}
  \item The endomorphisms $\lambda_k$, $k\in\omega$, $\lambda_{p,s}$, $p,n\in\mathbb{N}$, $s\in\{0,\ldots,n-1\}$ and $\varsigma$ of the monoid $\mathscr{C}_{-}(a,b)$ are introduced by the same formulae.
  \item Repeating the proofs of corresponding statements in Sections~\ref{section-2} and~\ref{section-3} we obtain that the same statements about the corresponding endomorphisms of the monoid $\mathscr{C}_{-}(a,b)$. Moreover, the corresponding semigroups of endomorphisms are pairwise isomorphic.
\end{enumerate}
\end{remark}

\section*{Acknowledgements}

The authors acknowledge Alex Ravsky and the referee  for they comments and suggestions.

\end{document}